\documentclass[11pt, reqno, a4paper]{amsart}

\usepackage[T1]{fontenc}
\usepackage{lmodern}          
\usepackage[margin=1.2in]{geometry}

\usepackage{amssymb}

\usepackage{enumitem}

\setlist[enumerate]{label=(\arabic*), font=\normalfont, leftmargin=*}

\usepackage{graphicx}
\usepackage{tikz}
\usetikzlibrary{decorations.pathreplacing, calligraphy, arrows.meta, positioning}

\usepackage{hyperref}
\hypersetup{
    colorlinks=true,
    linkcolor=blue!70!black,   
    citecolor=green!40!black,   
    urlcolor=blue!70!black,     
    pdfborder={0 0 0},          
    pdfauthor={Junhao Chen, Jie Zou},
    pdftitle={Borel Distinguishing Number of Finitely Generated Group Shifts}
}

\theoremstyle{plain}
\newtheorem{theorem}{Theorem}[section] 
\newtheorem{proposition}[theorem]{Proposition}
\newtheorem{lemma}[theorem]{Lemma}
\newtheorem{corollary}[theorem]{Corollary}
\newtheorem{question}[theorem]{Question}

\theoremstyle{definition}

\theoremstyle{remark}
\newtheorem{remark}[theorem]{Remark}

\newcommand{\Aut}{\operatorname{Aut}}   
\newcommand{\Cay}{\operatorname{Cay}}

\newcommand{\id}{\operatorname{id}}

\title{The Borel Distinguishing Number of Schreier Graphs}

\author{Junhao Chen}

\author{Jie Zou}

\date{\today}

\begin{document}

\begin{abstract}
The Borel distinguishing number $D_B(\mathcal{G})$ of a Borel graph $\mathcal{G}$, recently introduced by Bilge and Kaya, is the minimum number of colors required to break the symmetry of $\mathcal{G}$ in a Borel way. In this paper, we investigate the Borel distinguishing number of Schreier graphs induced by the free part of the shift action $\Gamma \curvearrowright n^\Gamma$. We prove that $D_B(\mathcal{G})\le n+1$ for $\Gamma=\mathbb{Z}^d$ equipped with the standard generators. Moreover, we show that $D_B(\mathcal{G})\ge n+1$ if $\Gamma$ is amenable and  $\{ \gamma \in \Aut(\Cay(\Gamma,S)) \mid \gamma(e) = e \}$ is non-trivial. We also  show that $D_B(\mathcal{G})$ is finite if $\Gamma$ is finitely generated, and give some applications of our results. These results answer some questions raised by Bilge and Kaya.
\end{abstract}

\maketitle

\section{Introduction}
Borel combinatorics is a new branch of descriptive set theory that studies classical combinatorial properties of graphs under certain ``definable'' requirements. In 1999, Kechris, Solecki, and Todorcevic  published a celebrated paper on Borel chromatic numbers \cite{KST99}, which is known as the birth of Borel combinatorics. In the following two decades, many researchers contributed to this new area. We refer the reader to the two surveys by Pikhurko \cite{Pik21} and Kechris and Marks \cite{KM20}. Borel combinatorics mainly focuses on Borel chromatic numbers, perfect matchings of graphs, and hyperfiniteness of connected equivalence relations. One reason why Borel combinatorics appeals to researchers is that answers to combinatorial questions might differ when definability requirements are imposed on the combinatorial objects. For example, the Schreier graph of an irrational rotation has chromatic number $2$, while its Borel chromatic number is $3$.

In \cite{BK25}, Bilge and Kaya introduced a new topic for Borel graphs, the Borel distinguishing number, which extends the notion in \cite{AC96} to characterize the symmetry of a Borel graph. In the classical setting, the distinguishing number of a finite connected graph is at most $\Delta(\mathcal{G})+1$, where $\Delta(\mathcal{G})$ denotes the maximum degree of the graph $\mathcal{G}$; see \cite{CT06}. Bilge and Kaya showed the differences between the classical and Borel settings of distinguishing numbers. They provided examples that separate the distinguishing number and the Borel distinguishing number at various levels. For instance, they proved that the Borel distinguishing number of a Schreier graph $\mathcal{G}$, induced by the free part of the shift action $\mathbb{Z} \curvearrowright n^\mathbb{Z}$, has an upper bound of $2n-1$ and a lower bound of $n$ for $n\ge 3$. They also showed that the classical distinguishing number of $\mathcal{G}$ is $2$; hence the distinguishing number and the Borel distinguishing number could be entirely different. However, they did not give the exact value of the Borel distinguishing number of $\mathcal{G}$. The following is a general question posed in \cite{BK25}:

\begin{question}
Let $\mathcal{G}$ be the Schreier graph of a countable marked group with a free shift action $\Gamma \curvearrowright n^\Gamma$. What are the values of $D_B(\mathcal{G})$ for various $(\Gamma,S)$ and alphabets $n$? In particular, what are the values in the case of $\Gamma=\mathbb{Z}$ or $\Gamma=\mathbb{F}_2$ equipped with the standard generators?
\end{question}

Our first result gives a lower bound in the amenable case. For a group $\Gamma$, let $e$ denote its unit element. We let $A_e(\Gamma)=\{\gamma \in \Aut(\Cay(\Gamma,S)) \mid \gamma(e) = e \}.$ Then:

\begin{theorem}\label{12}
Suppose $\Gamma$ is a finitely generated amenable group and $n\ge 2$ is an integer. Let $\mathcal{G}$ be the Schreier graph induced by the free part of the shift action $\Gamma \curvearrowright n^\Gamma$. 
\begin{enumerate}
    \item If $A_{e}(\Gamma)$ is trivial, then $D_{B}(\mathcal{G})= n$.
    \item If $A_{e}(\Gamma)$ is non-trivial, then $D_{B}(\mathcal{G})\ge n+1$.
\end{enumerate}
\end{theorem}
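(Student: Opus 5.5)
The plan is to reduce both parts to a statement about Borel $\Gamma$-equivariant maps into a full shift. For a Borel coloring $c\colon X\to k$ of the free part $X$ of $n^\Gamma$, I would consider the Borel equivariant map $f_c\colon X\to k^\Gamma$ given by $f_c(x)(g)=c(g^{-1}\cdot x)$.

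\textbf{Key lemma.} If $c$ is distinguishing, then $f_c$ is injective. Suppose $f_c(x)=f_c(y)$ with $x\neq y$.
\begin{itemize}
\item If $x,y$ lie in different orbits, define $\varphi$ by $g\cdot x\mapsto g\cdot y$ and $g\cdot y\mapsto g\cdot x$, and the identity elsewhere.
\item If $y=\delta\cdot x$, define $\varphi(g\cdot x)=g\delta\cdot x$ on that orbit and the identity elsewhere.
\end{itemize}
Freeness makes $\varphi$ a well-defined bijection. Right multiplication commutes with the left generators, so edges $z\sim s\cdot z$ are preserved. The equality $f_c(x)=f_c(y)$ is exactly the statement that $\varphi$ preserves $c$. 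Since $\varphi\neq\id$, this contradicts that $c$ is distinguishing.

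\textbf{Part (1), upper bound.} I would take $c(x)=x(e)$, so that $f_c$ is essentially the identity on $X$. Let $\varphi$ be a color-preserving automorphism. It maps the component of $x$ isomorphically onto the component of $y=\varphi(x)$. Each component is identified with $\Cay(\Gamma,S)$ via $g\mapsto g\cdot x$. Composing these identifications gives an automorphism of the Cayley graph. Because $A_e(\Gamma)$ is trivial, this automorphism is determined by the image of $e$, so it is a translation of the form $\varphi(g\cdot x)=g\cdot y$. Color preservation then says that $x$ and $y$ agree at every coordinate, i.e.\ $y=x$ (after fixing the sign convention for the shift). Hence $\varphi=\id$ and $D_B(\mathcal G)\le n$.

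\textbf{Lower bound $n$ in both parts.} Suppose $c$ is a Borel distinguishing coloring with $k<n$ colors. By the lemma, $f_c$ is a Borel equivariant injection $X\to k^\Gamma$. Let $\mu$ be the uniform Bernoulli measure on $n^\Gamma$; it is concentrated on $X$ when $\Gamma$ is infinite, and the finite case should be handled separately. Then $(f_c)_*\mu$ is an invariant measure isomorphic to $\mu$, so it has Kolmogorov--Sinai entropy $\log n$. Here I use Ornstein--Weiss entropy theory, which is where amenability enters. But every invariant measure on $k^\Gamma$ has entropy at most $\log k<\log n$, a contradiction.

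\textbf{Part (2).} Assume $c$ is distinguishing with exactly $n$ colors and fix $\sigma\in A_e(\Gamma)$ with $\sigma\ne\id$. Again $f=f_c$ is a Borel equivariant injection $X\to n^\Gamma$. The measure $f_*\mu$ has entropy $\log n$, so by uniqueness of the measure of maximal entropy for the full shift over an amenable group, $f_*\mu=\mu$. Hence the Borel image $A=f(X)$ has $\mu$-full measure.

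The map $z\mapsto z\circ\sigma$ (with the appropriate inverses) permutes coordinates, so it preserves $\mu$. Therefore $A\cap\{z : z\circ\sigma\in A\}$ has full measure and is nonempty. Pick $x,y$ with $f(y)=f(x)\circ\sigma$, and define $\varphi(g\cdot x)=\sigma(g)\cdot y$ on the orbit of $x$, and its inverse on the orbit of $y$ if that orbit is different.
\begin{itemize}
\item $\varphi$ is a graph isomorphism between components, because $\sigma$ is a Cayley graph automorphism.
\item The defining relation $f(y)=f(x)\circ\sigma$ says exactly that $\varphi$ preserves colors.
\item $\varphi$ is not the identity: if $y=x$, choose $g$ with $\sigma(g)\neq g$ and use freeness; if $y\neq x$, then $\varphi(x)=y\neq x$.
\end{itemize}
This contradicts that $c$ is distinguishing, so $D_B(\mathcal G)\ge n+1$.

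\textbf{Main obstacle.} I expect the hardest points to be twofold. One is getting the left/right and inverse conventions right, so that $\varphi$ really preserves edges and colors. The other is justifying that the maximal-entropy measure on $n^\Gamma$ is unique for amenable $\Gamma$. If the citation for that uniqueness is unavailable, I would instead apply the equivariant Borel injection argument to a suitable invariant set of full measure.
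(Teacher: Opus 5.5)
Your proposal is correct and is essentially the paper's proof. Your injectivity lemma and the automorphism you build in Part~(2) are the two halves of Proposition~\ref{equivalence} written out inline, and your lower bounds use the same ingredients: the pushforward of the Bernoulli measure with the variational principle, uniqueness of the maximal-entropy measure, and invariance of $\lambda$ under coordinate permutations. The uniqueness you cite is proved in the paper as Lemma~\ref{uniqueness}: if $\mu\neq\lambda$, some cylinder on a finite set $F$ has the wrong measure, so $H_\mu(\mathcal{P}^F)<|F|\log n$ and hence $h_\mu\le H_\mu(\mathcal{P}^F)/|F|<\log n$ for the generating partition $\mathcal{P}$. The inverse bookkeeping you flag disappears if you use $f_c(x)(g)=c(g\cdot x)$ as the paper does; then $f_c(x)=\sigma\cdot f_c(y)$ directly gives the color-preserving automorphism $g\cdot x\mapsto\sigma^{-1}(g)\cdot y$.
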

Part of the proof idea appears in \cite[Proposition 6]{BK25} in the case of $\Gamma=\mathbb{Z}$, where the Variational Principle for dynamical systems arising from $\mathbb{Z}$-actions is used. To generalize their results to amenable groups, we need some amenable entropy theory, which was  developed by Ornstein and Weiss \cite{OW87}. For $(2)$, we give a short proof by combining the variational principle with a trivial but critical lemma, which asserts that there exists a unique invariant measure, namely the Bernoulli measure, that attains maximal entropy.

Then we turn to the case of $\Gamma=\mathbb{Z}^d$. Our next result gives an upper bound.
\begin{theorem}
Let $d\ge 1,n\ge 2$ be two integers, and let $\mathcal{G}$ be the Schreier graph induced by the free part of the shift action $\mathbb{Z}^d \curvearrowright n^{\mathbb{Z}^d}$. Then $D_{B}(\mathcal{G})\le n+1$.
\end{theorem}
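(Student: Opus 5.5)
The natural $n$-coloring $c_0(x)=x(e)$ is the starting point. On each component (a copy of $\Cay(\mathbb{Z}^d,S)$, since the action is free) it reproduces the configuration $x$ itself. An automorphism of $\mathcal{G}$ preserving $c_0$ carries the component of $x$ to the component of $y$ via an element of $\Aut(\Cay(\mathbb{Z}^d,S))=\mathbb{Z}^d\rtimes A_e$, where $A_e$ is the hyperoctahedral group of signed coordinate permutations. Translations are harmless: if the induced map is $\gamma x\mapsto\gamma y$ and the colors agree, then $x=y$ and the map is the identity. The only obstruction, as in Theorem~\ref{12}(2), comes from the nontrivial elements of $A_e$, for instance swapping the orbit of $x$ with the orbit of its reflection. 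So the plan is to use the single extra color $n$ to Borel-ly \emph{mark an orientation} (a choice of the positive direction of every axis and of their order) on every orbit, while keeping the original configuration recoverable from the new coloring.

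Concretely, I will invoke the Gao--Jackson marker/tiling machinery for free Borel $\mathbb{Z}^d$-actions. This gives a Borel partition of each orbit into rectangles with side lengths in $\{L,L+1\}$ for a large fixed $L$. Inside each tile, at a Borel-chosen position near the tile's lexicographically least corner, I place a gadget made of cells recolored with color $n$. The gadget is a union of $d$ ``chains'', the $i$-th running in direction $+e_i$, with the chains made pairwise distinguishable, for example by starting the $i$-th chain at offset $i$ from a common base cell. Color $n$ appears nowhere else. Any automorphism preserving the new coloring must therefore map gadgets to gadgets, respecting the direction and the labelling of each chain. Hence its linear part is trivial, and it acts as a translation-type map $\gamma x\mapsto \gamma y$ between components. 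Since the tiles are large and the gadgets are short, chains from different tiles cannot be confused.

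The main obstacle is \emph{injectivity}. Recoloring cells with color $n$ destroys their original symbols, and two points differing only on gadget cells would become indistinguishable, which again yields a nontrivial color-preserving map. To avoid this I will encode the erased symbols in the geometry of the chains. Fix a reference cell $r$ adjacent to the chain start that is never recolored, and let $a=x(r)$. Starting at $p_0$, recolor $p_j$ with color $n$ and set $p_{j+1}=p_j+k(x(p_j))e_i$, where $k$ is a fixed bijection from $\{0,\dots,n-1\}\setminus\{a\}$ onto $\{1,\dots,n-1\}$. The chain stops at the first $p_j$ with $x(p_j)=a$, and this last cell is recolored too. Decoding is then local: every gap of length at most $n-1$ determines the erased symbol, and the absence of a continuation within distance $n-1$ signals the value $a$, which is read off the untouched cell $r$.

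The remaining points to check are routine but must be stated carefully. First, to guarantee that the chain terminates inside the tile, I will choose $r$ and $p_0$, in a Borel way, by pigeonhole on a window of $n+1$ consecutive cells along $e_i$, so that the symbol $a$ does recur within bounded distance along the ray. Second, gadgets in different tiles must not overlap, which is ensured by $L\gg n d$. Third, the recovered orientation and the recovered symbols together force $x=y$ and the automorphism to be the identity. Everything is defined by finitely many local rules relative to Borel marker sets, so the resulting coloring into $n+1$ colors is Borel and distinguishing, which proves the bound $D_B(\mathcal{G})\le n+1$.
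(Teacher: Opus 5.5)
Your proposal has a genuine gap at the step you treat as automatic: that an automorphism preserving the new coloring ``must map gadgets to gadgets, respecting the direction and the labelling of each chain.'' Apart from the encoding, this is the entire content of the theorem, and your gadget does not provide it. When you compare $\varphi(x)$ with $\gamma\cdot\varphi(y)$ for $\gamma\in A_e(\mathbb{Z}^d)\setminus\{\id\}$ (condition (2) of Proposition~\ref{equivalence}), the only usable information is the colored configuration itself. The tiles, their lexicographically least corners, the base cell and the reference cell $r$ are not marked in the output, so none of them can certify where a chain starts or which way it points. In $d=1$ a gadget is just a cluster of $n$-colored cells whose consecutive gaps lie in $\{1,\dots,n-1\}$ and encode unconstrained erased symbols. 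Its mirror image is again a cluster of exactly the same kind, so nothing in your argument excludes $\varphi(x)=\gamma\cdot\varphi(y)$ for the reflection $\gamma$. For $d\ge 2$ one might hope to reconstruct the base cell from the relative positions of the chains, but you do not argue this; single-cell chains, for instance, carry no visible direction.

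The missing idea is to separate orientation from data: use a fixed, recognizable frame together with a fixed region that is completely overwritten by a pattern with trivial stabilizer in $A_e(\mathbb{Z}^d)$. The paper does exactly this. It takes a sparse Borel set of centers, namely points of minimal color for a Borel proper coloring of $\mathcal{G}^{4dk}$ (Theorem~\ref{KST}). Around each center it paints the boundary of a cube of length $k$ with color $n$. On the middle layer it writes a fixed pattern: a single $1$ at $v=(1,2,\dots,d-1,\frac{k-2}{2})$ and $0$ elsewhere. Since the coordinates of $v$ are positive and pairwise distinct, Lemma~\ref{Ae} gives $\gamma(v)\neq v$ for every $\gamma\neq\id$. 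It then stores the whole original content of the cube in the core via an injection $n^{Q_k}\to(n+1)^{C(Q_k)}$, which exists by counting. A nontrivial $\gamma$ sends a changed cube to a cube that is still recognizable by its frame but carries a pattern different from $p$ on its middle layer, and this never happens in the image of $\varphi$.

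Your termination argument also fails as stated. Pigeonhole on a window shows only that the symbol $a$ recurs along the ray, but the chain moves by jumps $k(x(p_j))\in\{1,\dots,n-1\}$ and can skip every occurrence of $a$. For example, take $d=1$, $n=3$, bijections with $k_0(2)=k_2(0)=2$, and a stretch of the configuration reading $0,2,0,2,\dots$ that is longer than two tiles; such stretches occur in points of the free part. For every adjacent pair $r,p_0$ inside it one has $a=x(r)\neq x(p_0)$ and the step is $2$. So the chain visits only cells carrying the symbol $x(p_0)$, leaves its tile, and can collide with other gadgets. This breaks both your non-overlap claim and your decoding. Storing the erased symbols by a counting injection into a reserved region of fixed shape, as the paper does with the core, avoids this problem entirely.
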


We construct a universal pattern and put it on the middle layers of some cubes to break the symmetry in a Borel way. Combining the above two theorems, we calculate the exact value in the case $\Gamma=\mathbb{Z}^d$.

For the free group $\mathbb{F}_2$,  Bilge and Kaya initially hypothesized that the Borel distinguishing number for the $\mathbb{F}_2$ shift action could be uncountable. We give a short proof using a proper coloring of a power graph to show that the Borel distinguishing number is finite for any finitely generated group over a finite alphabet. 

\begin{theorem}
Let $\Gamma$ be a finitely generated group with a symmetric generating set $S$ of size $d$. Let $\mathcal{G}$ be the Schreier graph induced by the free part of the shift action $\Gamma \curvearrowright Y^\Gamma$, where $Y$ is a standard Borel space. Then $D_B(\mathcal{G}) \le |Y| \times (d^2+1)^{d+1}. $
\end{theorem}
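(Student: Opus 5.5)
The plan is to build a Borel coloring of the free part $X\subseteq Y^\Gamma$ out of two ingredients. The first is the coordinate $x(e)$, which accounts for the factor $|Y|$. The second is a Borel proper coloring of an auxiliary power graph, which lets us read off the generator labels of edges from the colors; this accounts for the factor $(d^2+1)^{d+1}$. Write $S=\{s_1,\dots,s_d\}$ and let $\mathcal{G}$ join $x$ to $s_i\cdot x$. Because the action on $X$ is free, the neighbors $s_1x,\dots,s_dx$ of $x$ are pairwise distinct. Hence $\mathcal{G}$ is a Borel graph of maximum degree at most $d$.

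\textbf{Step 1 (power graph coloring).} Let $\mathcal{G}^{\le 2}$ be the graph joining distinct vertices at $\mathcal{G}$-distance at most $2$. It is Borel, and its degree is at most $d+d(d-1)=d^2$. By the Kechris--Solecki--Todorcevic theorem \cite{KST99}, every Borel graph of bounded degree $\Delta$ admits a Borel proper $(\Delta+1)$-coloring. So we get a Borel $c':X\to\{0,\dots,d^2\}$ that is proper on $\mathcal{G}^{\le 2}$. In particular, the colors $c'(s_1x),\dots,c'(s_dx)$ are pairwise distinct, since any two neighbors of $x$ are at distance at most $2$.

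\textbf{Step 2 (the coloring).} Define
\[
c(x)=\bigl(x(e),\,c'(x),\,c'(s_1x),\dots,c'(s_dx)\bigr),
\]
which is Borel and takes at most $|Y|\cdot(d^2+1)^{d+1}$ values. Now let $\varphi$ be an automorphism of $\mathcal{G}$ preserving $c$.

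\textbf{Step 3 (recovering edge labels).} Fix $x$ and $i$. The vertex $\varphi(s_ix)$ is a neighbor of $\varphi(x)$, and $c'(\varphi(s_ix))=c'(s_ix)$. Since $c(\varphi(x))=c(x)$, the $(i+2)$-th coordinate of $c(\varphi(x))$ tells us $c'(s_i\varphi(x))=c'(s_ix)$. The neighbors of $\varphi(x)$ have pairwise distinct $c'$-colors, so $\varphi(s_ix)=s_i\varphi(x)$. Since $S$ generates $\Gamma$, induction on word length gives $\varphi(\gamma x)=\gamma\varphi(x)$ for all $\gamma\in\Gamma$.

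\textbf{Step 4 (conclusion).} Preservation of the first coordinate gives $(\gamma\varphi(x))(e)=\varphi(\gamma x)(e)=(\gamma x)(e)$ for all $\gamma$. Under the shift, $(\gamma x)(e)$ equals $x(\gamma^{\pm1})$, depending on the convention. Hence $\varphi(x)(\delta)=x(\delta)$ for every $\delta\in\Gamma$, so $\varphi(x)=x$ and $\varphi=\id$. Thus $c$ is a Borel distinguishing coloring, which gives the stated bound.

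The main point requiring care is Step 3. The coloring must encode enough local information that any color-preserving automorphism is forced to respect the edge labels by generators. This is exactly why we need properness on the distance-$2$ graph rather than on $\mathcal{G}$ itself, and why the colors of all $d$ neighbors are recorded in order. Freeness is used to guarantee that the degree bounds hold and that the neighbors $s_ix$ are distinct.
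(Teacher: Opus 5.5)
Your proposal is correct and matches the paper's proof: the same coloring $x\mapsto (x(e),k(x),k(s_1x),\dots,k(s_dx))$ built from a Borel proper coloring of the distance-$2$ power graph, the same use of freeness to force $\varphi(s_ix)=s_i\varphi(x)$, and the same final read-off of $\varphi(x)=x$ from the $x(e)$ coordinate. The only cosmetic difference is that you compare colors among the neighbors of $\varphi(x)$, whereas the paper derives $k(s_jx)=k(s_ix)$ and compares colors among the neighbors of $x$.
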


These two theorems answer parts of the questions from Bilge and Kaya. As a corollary of the above theorems, we also resolve the countably infinite alphabet case for finitely generated amenable groups.

This paper is organized as follows. In Section 2, we introduce some preliminaries and fix notation. In Section 3, we  give an equivalent characterization of Borel distinguishing colorings of Schreier graphs and show some applications. In Section 4, we get the lower bound by using amenable entropy theory. In Section 5, we obtain the upper bound for Schreier graphs induced by $\mathbb{Z}^d$. Finally, in Section 6, we show a general upper bound for all finitely generated groups.

\section{Preliminaries}
\subsection{Classical graph theory and Borel combinatorics}
Let $\mathcal{G}=(X,E)$ be an (undirected) graph, where $X$ is a set and $E$ is a symmetric and irreflexive subset of $X\times X$. $X$ is called the \textit{vertex set} and $E$ is called the \textit{edge set}. The graph $\mathcal{G}$ is \textit{locally finite} if the following holds: $\forall x\in X$, $\{y\in X \mid yEx\}$ is a finite set. The \textit{maximal degree} of $\mathcal{G}$, which we denote by $\Delta(\mathcal{G})$, is the supremum of $|\{y\in X \mid yEx\}|$ for all $x$.
A \textit{distinguishing $Y$-coloring} is a map $c: X \longrightarrow Y$ such that for any non-trivial graph automorphism $\psi$, the coloring is not preserved, i.e., $c \circ \psi \neq c$. In other words, a distinguishing coloring of $\mathcal{G}$ is a labeling of its vertices that results in a labeled graph without non-trivial symmetries. The \textit{distinguishing number} of a graph $\mathcal{G}$ is defined to be:
\[ D(\mathcal{G})=\min\{|Y| \mid \text{There exists a distinguishing coloring } c: X \longrightarrow Y \} \]

We introduce some basic definitions in descriptive set theory, and our reference is \cite{Kec95}. A \textit{standard Borel space} is a measurable space $(X,\mathcal{B})$ where $\mathcal{B}$ is the Borel $\sigma$-algebra induced by some Polish topology on $X$. Equivalently, it is a space equipped with the $\sigma$-algebra generated by the open subsets of some separable completely metrizable topological space. A \textit{Borel graph} $\mathcal{G}=(X,E)$ is a graph where the vertex set $X$ is a standard Borel space, and the edge set $E$ is a Borel subset of $X \times X$ which is equipped with the product $\sigma$-algebra. A \textit{Borel distinguishing $Y$-coloring} of $\mathcal{G}$ is a distinguishing coloring that is Borel measurable. The \textit{Borel distinguishing number} is defined to be:
\[ D_B(\mathcal{G})=\min\{|Y| \mid \text{There exists a Borel distinguishing coloring } c: X \rightarrow Y,\text{Y is standard Borel}\} \]

 For a Borel graph $\mathcal{G}=(X,E)$ and a standard Borel space $Y$, a \textit{Borel proper $Y$-coloring} of $\mathcal{G}$ is a Borel map $c: X \longrightarrow Y$ such that for each $x,y\in X$, $xEy \implies c(x)\ne c(y)$. The Borel chromatic number of $\mathcal{G}$ is defined to be:
\[ \chi_B(\mathcal{G})=\min\{|Y| \mid \text{There exists a Borel proper coloring } c: X \longrightarrow Y\} \]

Kechris, Solecki, and Todorcevic proved the following foundational theorem for graphs of bounded degree in \cite[Proposition~4.6]{KST99}:

\begin{theorem}
\label{KST}
For any Borel graph $\mathcal{G}$ with bounded maximum degree $\Delta(\mathcal{G})$, its Borel chromatic number satisfies $\chi_B(\mathcal{G})\le \Delta(\mathcal{G}) + 1$.    
\end{theorem}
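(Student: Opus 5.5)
The plan is the standard two-step argument: first decompose $X$ into countably many Borel independent sets, then run a greedy coloring along this decomposition, checking Borelness at each stage.

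\emph{Step 1 (Borel handling of neighborhoods).} Since $\mathcal{G}$ has bounded degree, every vertical section $E_x=\{y \mid xEy\}$ of the Borel set $E\subseteq X\times X$ is finite. By the Lusin--Novikov uniformization theorem, $E$ is a countable union of Borel graphs of partial functions $f_k$. Consequently, for any Borel $B\subseteq X$ the set
\[ N(B)=\{x \mid \exists y\,(xEy \wedge y\in B)\} \]
is Borel, since it is the projection of $E\cap(X\times B)$, a Borel set with countable sections. This fact will be used repeatedly.

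\emph{Step 2 (countable Borel independent partition).} Fix a Polish topology on $X$ generating its Borel structure, and let $\{U_m\}_{m\in\mathbb{N}}$ be a countable basis. Take any $x$. It has only finitely many neighbors, none equal to $x$ because $E$ is irreflexive, and the space is Hausdorff. So some $U_m$ contains $x$ but none of its neighbors. Let $A_m$ be the set of $x$ for which $m$ is the least such index. Then
\[ A_m = \bigl(U_m\setminus N(U_m)\bigr)\setminus \bigcup_{m'<m}\bigl(U_{m'}\setminus N(U_{m'})\bigr), \]
which is Borel by Step 1. Each $A_m$ is $\mathcal{G}$-independent: if $x,y\in A_m$ were adjacent, then $y$ would be a neighbor of $x$ lying in $U_m$, contradicting the choice of $m$. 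The sets $A_m$ partition $X$.

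\emph{Step 3 (greedy coloring).} Let $\Delta=\Delta(\mathcal{G})$. We define Borel maps $c_m:A_0\cup\dots\cup A_m\to\{0,\dots,\Delta\}$ by recursion on $m$. At stage $m$, each $x\in A_m$ receives the least color in $\{0,\dots,\Delta\}$ not already used by a neighbor of $x$ colored at an earlier stage.
\begin{itemize}
\item Such a color exists, because $x$ has at most $\Delta$ neighbors.
\item No conflict arises inside stage $m$, because $A_m$ is independent.
\item Hence the union $c=\bigcup_m c_m$ is a proper coloring with $\Delta+1$ colors.
\end{itemize}

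\emph{Main obstacle: Borelness of $c$.} I would argue by induction on $m$ that every level set $C_i^{m}=c_m^{-1}(i)$ is Borel. Assuming this for all stages before $m$, the set of $x$ having an earlier-colored neighbor of color $i$ is $N\bigl(\bigcup_{m'<m}C_i^{m'}\bigr)$, which is Borel by Step 1. The new color classes at stage $m$ are then Boolean combinations of $A_m$ and these finitely many Borel sets, hence Borel. Therefore $c$ is Borel, and $\chi_B(\mathcal{G})\le\Delta(\mathcal{G})+1$.
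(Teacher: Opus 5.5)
Your proposal is correct. Lusin--Novikov makes $N(B)$ Borel, the basis argument partitions $X$ into countably many Borel independent sets, and the stagewise greedy coloring, with Borelness checked by induction, gives a Borel proper coloring with $\Delta(\mathcal{G})+1$ colors.

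The paper does not prove this statement; it cites Kechris--Solecki--Todorcevic (Proposition 4.6). Your argument is essentially that standard proof: a countable Borel coloring of a locally finite graph, obtained from a countable basis, followed by greedy reduction to $\Delta+1$ colors.
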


Let $k$ be an integer. We consider the $k$-\textit{power graph} of $\mathcal{G}=(X,E)$, denoted by $\mathcal{G}^k$. The vertex set of $\mathcal{G}^k$ is still $X$, and for each $x,y\in X$, $xE(\mathcal{G}^k)y \iff 0<d_{\mathcal{G}}(x,y)\le k$, where $d_{\mathcal{G}}$ is the path metric of $\mathcal{G}$. By \cite[Corollary 5.2]{Pik21}, if $\mathcal{G}$ is a locally finite Borel graph, so is $\mathcal{G}^k$.

\subsection{Schreier graph }
Next, we turn to some algebraic and dynamical notation. In this paper, we always assume that every group is countably infinite. 
 Let $(\Gamma,S)$ be a discrete finitely generated group, where $S$ is a finite symmetric generating set not containing the identity. We let $e$ denote the unit element of the group. Suppose $\Gamma$ has a Borel action on a standard Borel space $X$. We define the \textit{Schreier graph} $\mathcal{G}= (X,E)$ on $X$ by:
$ x E y \iff x\ne y \wedge\exists s\in S, s\cdot x=y. $ We mainly consider the Schreier graph induced by the free part; that is, we restrict the vertex set to $\{x\in X:g\cdot x\ne x, \forall g\in\Gamma\setminus\{e\}\}$. By \cite[Lemma 4.4]{Pik21}, this is a Borel graph. We usually take this action to be the \textit{(right) shift}, i.e.:
$ (g \cdot x)(h) = x(hg)$.

For a Borel map $f: X \longrightarrow Y$, we define the \textit{symbolic factor map} $\varphi_f: X \longrightarrow Y^\Gamma$ by:
\[ \varphi_f(x)(g) = f(g \cdot x), \quad \forall g \in \Gamma. \]
Note that this map is  $\Gamma$-equivariant: 
\[ \varphi_f(s \cdot x)(g)=f(g \cdot (s\cdot x))=f((gs)\cdot x)=\varphi_f(x)(gs)=(s \cdot \varphi_f(x))(g). \]

We let $\Cay(\Gamma, S)$ denote the  Cayley graph of $\Gamma$ with respect to the symmetric generating set $S$; that is, for any two distinct elements $g,h\in \Gamma$, $gE(\Gamma)h\iff \exists s\in S, g=sh$. Let $\Aut(\Cay(\Gamma, S))$ be the group consisting of all graph automorphisms of $\Cay(\Gamma, S)$. Let $A_e(\Gamma,S)= \{ \gamma \in \Aut(\Cay(\Gamma, S))\mid \gamma(e)=e\}$. When there is no ambiguity, we will omit $S$.

On each product space $Y^\Gamma$, $A_e(\Gamma)$ has a natural action defined by:
\[ (\gamma \cdot \alpha)(g) = \alpha(\gamma^{-1}(g)), \quad \forall g \in \Gamma, \forall \gamma \in A_e(\Gamma). \]

\section{Distinguishing Coloring for Schreier Graphs}

We first state an equivalent condition for a Borel distinguishing coloring on Schreier graphs. This proposition generalizes \cite[Proposition 5]{BK25} and plays a central role in our proofs. In this section, we assume $\Gamma$ is a countable discrete group.

\begin{proposition}\label{equivalence}
Suppose $\Gamma$ acts freely on a standard Borel space $X$ and $Y$ is a standard Borel space. Let $\mathcal{G}=(X,E)$ be the induced Schreier graph, and let $f: X\longrightarrow Y$ be a Borel map. The following are equivalent:
\begin{enumerate}
    \item $f$ is a Borel distinguishing coloring of $\mathcal{G}$.
    \item $\varphi_f$ is injective, and for any $\gamma \in A_e(\Gamma) \setminus \{\id\}$ and any $x,y\in X$,  $\varphi_f(x) \ne \gamma\cdot\varphi_f(y)$.
    \item $\varphi_f$ is injective, and the family $\{\gamma\cdot \varphi_f(X)\}_{\gamma\in A_e(\Gamma)}$ is pairwise disjoint.
\end{enumerate}
\end{proposition}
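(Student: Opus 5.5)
The plan is to show that every graph automorphism of $\mathcal{G}$, restricted to a single component, is described by an element of $A_e(\Gamma)$ together with a choice of base points. With that in hand, $(1)\Leftrightarrow(2)$ is a direct translation, and $(2)\Leftrightarrow(3)$ is formal. First I will set up the identification. For $x\in X$ let $\iota_x\colon\Gamma\to X$, $g\mapsto g\cdot x$. By freeness $\iota_x$ is a bijection onto the orbit of $x$. Since $\iota_x(sg)=s\cdot\iota_x(g)$ and $s\cdot z\neq z$ (again by freeness), $\iota_x$ is a graph isomorphism from $\Cay(\Gamma,S)$ (edges $g\sim sg$) onto the connected component of $x$ in $\mathcal{G}$. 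Also $f(\iota_x(g))=\varphi_f(x)(g)$ by definition. Finally, $A_e(\Gamma)$ acts on $Y^\Gamma$ by a group action, so $(\gamma\cdot\alpha)(g)=\alpha(\gamma^{-1}g)$.

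For $(1)\Rightarrow(2)$ I argue by contraposition and build a nontrivial color-preserving automorphism. Suppose $\varphi_f(x)=\gamma\cdot\varphi_f(y)$ for some $\gamma\in A_e(\Gamma)$ and some $x,y$, where either $\gamma\ne\id$ or $x\ne y$. (Taking $\gamma=\id$ covers the failure of injectivity.) Then $\varphi_f(x)(g)=\varphi_f(y)(\gamma^{-1}g)$ for all $g$. I define $\psi$ as follows.
\begin{itemize}
\item If $x$ and $y$ lie in different components, set $\psi=\iota_y\circ\gamma^{-1}\circ\iota_x^{-1}$ on the component of $x$ and $\psi=\iota_x\circ\gamma\circ\iota_y^{-1}$ on the component of $y$, with $\psi=\id$ elsewhere.
\item If $y=h\cdot x$, set $\psi(g\cdot x)=\gamma^{-1}(g)h\cdot x$ on that component and $\psi=\id$ elsewhere.
\end{itemize}
In each case $\psi$ is a composition of graph isomorphisms (note that $g\mapsto gh$ is an automorphism of the Cayley graph with left edges), so $\psi$ is an automorphism. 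The identity $f\circ\psi=f$ follows directly from the displayed relation between $\varphi_f(x)$ and $\varphi_f(y)$. It remains to check that $\psi$ is nontrivial. In the first case $\psi$ moves $x$ to $y$. In the second case, if $\psi$ were the identity then $\gamma^{-1}(g)h=g$ for all $g$; taking $g=e$ gives $h=e$, and then $\gamma=\id$, which contradicts the hypothesis.

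For $(2)\Rightarrow(1)$, let $\psi$ be a nontrivial automorphism with $f\circ\psi=f$. Pick $x$ with $y:=\psi(x)\ne x$. Since $\psi$ maps the component of $x$ isomorphically onto that of $y$, the map $\delta:=\iota_y^{-1}\circ\psi\circ\iota_x$ is an automorphism of $\Cay(\Gamma,S)$ fixing $e$, so $\delta\in A_e(\Gamma)$. Then
\[\varphi_f(x)(g)=f(\psi(g\cdot x))=f(\delta(g)\cdot y)=\varphi_f(y)(\delta(g)),\]
that is, $\varphi_f(x)=\delta^{-1}\cdot\varphi_f(y)$. If $\delta\ne\id$ this contradicts the second clause of (2). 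If $\delta=\id$ it contradicts injectivity, since $x\ne y$. Borel measurability is part of both statements and needs no argument.

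For $(2)\Leftrightarrow(3)$: because $A_e(\Gamma)$ acts by a group action, $\gamma\cdot\varphi_f(x)=\delta\cdot\varphi_f(y)$ holds if and only if $\varphi_f(x)=(\gamma^{-1}\delta)\cdot\varphi_f(y)$. Moreover, $\gamma^{-1}\delta$ ranges over $A_e(\Gamma)\setminus\{\id\}$ exactly when $\gamma\ne\delta$. So pairwise disjointness of the translates is the same as the second clause of (2).

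The only delicate point is the same-component case in $(1)\Rightarrow(2)$, where one must produce a single well-defined nontrivial bijection of one component rather than a swap of two components. The explicit formula $g\cdot x\mapsto\gamma^{-1}(g)h\cdot x$ handles this.
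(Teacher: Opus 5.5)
Your proof is correct and follows the paper's argument essentially step for step. For $(1)\Rightarrow(2)$ you use the same same-orbit versus two-orbit construction of a color-preserving automorphism, and for $(2)\Rightarrow(1)$ you extract the element of $A_e(\Gamma)$ from $\psi$ exactly as the paper does, via $\psi(g\cdot x)=\delta(g)\cdot\psi(x)$. Folding non-injectivity into the case $\gamma=\id$ and explicitly checking that $g\cdot x\mapsto\gamma^{-1}(g)h\cdot x$ is nontrivial (which the paper leaves implicit) only tidy up the same proof.
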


\begin{proof}
$(2) \iff (3)$ is obvious.

$(1)\implies (2)$: We prove it by  contradiction. Suppose either $\varphi_f$ is not injective  or there exist $\gamma \in A_e(\Gamma)\setminus\{\id\}$ and $x,y \in X$ satisfying $\varphi_f(x) = \gamma\cdot\varphi_f(y)$. In the first case, 
we have $\varphi_f(x) =\varphi_f(y)$ for some distinct $x, y \in X$. If $x,y$ are in the same orbit, we can define an automorphism $\sigma$ by letting $\sigma (g\cdot x)=g\cdot y$ for each $g\in \Gamma$ and fixing all other orbits. If $x,y$ are not in the same orbit, we swap these two orbits by letting $\sigma (g\cdot x)=g\cdot y$ and $\sigma (g\cdot y)=g\cdot x$.

In the second case, for each $g\in \Gamma$, we have
$\varphi_f(x)(g)=(\gamma\cdot\varphi_f(y))(g)=\varphi_f(y)(\gamma^{-1}(g)).$
By the definition of $\varphi_f$, this means $f(g\cdot x)=f(\gamma^{-1}(g)\cdot y)$ for every $g\in\Gamma$. If $x,y$ are in the same orbit, define $\sigma$ on this orbit by
$\sigma(g\cdot x)=\gamma^{-1}(g)\cdot y,$
and fix all other orbits. If $x,y$ are not in the same orbit, define $\sigma$ on these two orbits by
$\sigma(g\cdot x)=\gamma^{-1}(g)\cdot y,\ \sigma(\gamma^{-1}(g)\cdot y)=g\cdot x$,
and fix all other orbits.

Such maps are non-trivial graph automorphisms of $\mathcal{G}$ and preserve the coloring $f$, contradicting the assumption that $f$ is distinguishing.

$(2)\implies(1)$: Suppose there is a $\psi\in \Aut(\mathcal G)\setminus\{\id\}$ satisfying $f\circ\psi=f$. Since $\psi$ is non-trivial, there is an $x\in X$ such that $x\ne\psi(x)$. Let $y=\psi(x)$.

For each fixed $g\in\Gamma$, by the assumption $f\circ\psi=f$, we have $\varphi_f(x)(g)=f(g\cdot x)=f(\psi(g\cdot x))$. Since the action is free and $\psi(g\cdot x)$ belongs to the orbit of $y=\psi(x)$, there is a unique element $h\in\Gamma$ such that $\psi(g\cdot x)=h\cdot y$. We denote $h$ by $\gamma(g)$.

We claim that $\gamma\in A_e(\Gamma)$. Since $\psi$ maps the orbit of $x$ bijectively onto the orbit of $y$ and the action is free, $\gamma$ is a bijection. Suppose that $g$ and $sg$ are adjacent in the  Cayley graph, where $s\in S$. Then $g\cdot x$ and $(sg)\cdot x=s\cdot(g\cdot x)$ are adjacent in $\mathcal G$. Hence $\gamma(g)\cdot y$ and $\gamma(sg)\cdot y$ are adjacent. By freeness, $\gamma(g)$ and $\gamma(sg)$ are adjacent in the  Cayley graph. By a similar argument, $\gamma^{-1}$ is a graph homomorphism. Moreover, $\gamma(e)=e$, because $\psi(x)=\psi(e\cdot x)=\gamma(e)\cdot\psi(x)$ and the action is free. Hence $\gamma\in A_e(\Gamma)$.

Therefore, for every $g\in\Gamma$,
$\varphi_f(x)(g)=f(g\cdot x)=f(\psi(g\cdot x))=f(\gamma(g)\cdot\psi(x))=\varphi_f(\psi(x))(\gamma(g))=(\gamma^{-1}\cdot\varphi_f(\psi(x)))(g)$. Thus $\varphi_f(x)=\gamma^{-1}\cdot\varphi_f(\psi(x))$.

If $\gamma=\id$, then $\varphi_f(x)=\varphi_f(\psi(x))$, and since $\varphi_f$ is injective, we get $x=\psi(x)$, contradicting the choice of $x$. If $\gamma\ne\id$, then $\gamma^{-1}\in A_e(\Gamma)\setminus\{\id\}$, and the equality $\varphi_f(x)=\gamma^{-1}\cdot\varphi_f(\psi(x))$ contradicts condition $(2)$. Hence $f$ is a distinguishing coloring.
\end{proof}

\begin{remark}\label{11}
By the proposition above, if $f$ is a Borel distinguishing $Y$-coloring, then the symbolic factor map $\varphi_f$ is a Borel equivariant embedding into $Y^\Gamma$. Also, to find a Borel distinguishing $Y$-coloring $f$, it suffices to find a Borel equivariant injection $\varphi: X \longrightarrow Y^\Gamma$ with the following property: for any $\gamma \in A_e(\Gamma) \setminus \{\id\}$ and $x,y\in X$, $\varphi(x) \ne \gamma\cdot\varphi(y)$, since $f(x)=\varphi(x)(e)$ will be the desired Borel distinguishing $Y$-coloring. 
\end{remark}

As  direct applications of Proposition \ref{equivalence}, we have the following two corollaries.

\begin{corollary}
Let $\mathcal{G}=(X,E)$ be the Schreier graph induced by the free part of the shift action $\Gamma \curvearrowright n^\Gamma$. If  $A_e(\Gamma)$ is trivial, then $D_B(\mathcal{G}) \le n$.
\end{corollary}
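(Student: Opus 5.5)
The plan is to apply Remark \ref{11} with the identity coloring. Let $X$ denote the free part of $n^\Gamma$, and take $f\colon X\to n$ given by $f(x)=x(e)$. This map is Borel, since it is the restriction of a coordinate projection, which is continuous. By Proposition \ref{equivalence}, it suffices to check two things: that $\varphi_f$ is injective, and that the disjointness condition holds. Because $A_e(\Gamma)=\{\id\}$, the family $\{\gamma\cdot\varphi_f(X)\}_{\gamma\in A_e(\Gamma)}$ has exactly one member, and condition (2) has no $\gamma\in A_e(\Gamma)\setminus\{\id\}$ to check. So the whole argument reduces to injectivity of $\varphi_f$.

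Next I compute $\varphi_f$ explicitly. For $x\in X$ and $g\in\Gamma$, we have $\varphi_f(x)(g)=f(g\cdot x)=(g\cdot x)(e)=x(eg)=x(g)$, using the right shift $(g\cdot x)(h)=x(hg)$. Thus $\varphi_f$ is the inclusion $X\hookrightarrow n^\Gamma$, which is trivially injective. (This is consistent with the equivariance computation already done in the paper.)

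It then follows that $f$ is a Borel distinguishing $n$-coloring, with $n$ viewed as a finite standard Borel space, and hence $D_B(\mathcal{G})\le n$. There is no real obstacle here. The only point needing care is the bookkeeping of the right-shift convention, so that $\varphi_f$ comes out as exactly the identity and not a composition with inversion. Even if it came out as $x\mapsto (g\mapsto x(g^{-1}))$, that map would still be injective, so the conclusion is robust.
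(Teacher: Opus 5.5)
Your proof is correct and follows the paper's argument: take $f(x)=x(e)$, observe that $\varphi_f(x)(g)=x(g)$, so $\varphi_f$ is the inclusion and hence injective. Since $A_e(\Gamma)$ is trivial, condition (2) of Proposition \ref{equivalence} is vacuous, so $f$ is a Borel distinguishing $n$-coloring.
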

\begin{proof}
Let $c: X\longrightarrow n$ be defined by $c(x) = x(e)$.  The induced symbolic factor map $\varphi_c$ satisfies $\varphi_c(x)(g) = c(g\cdot x)=(g\cdot x)(e)=x(g)$ for each $g$. Thus $\varphi_c$ is  injective. Since $A_e(\Gamma) = \{\id\}$, Condition (2) in Proposition \ref{equivalence} is  satisfied. Consequently, $D_B(\mathcal{G}) \le n$.
\end{proof}

\begin{corollary}\label{embedding_comparison}
Let $\mathcal{G}$ and $\mathcal{H}$ be two Schreier graphs induced by two free shift actions $\Gamma \curvearrowright X$ and $\Gamma \curvearrowright Y$, respectively. If there is a Borel equivariant injection $f$ from $X$ to $Y$, then we have $D_B(\mathcal{G})\le D_B(\mathcal{H})$. 
\end{corollary}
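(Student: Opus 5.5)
The statement is Corollary \ref{embedding_comparison}. The plan is to compose a Borel distinguishing coloring of $\mathcal{H}$ with $f$ and verify the result using Proposition \ref{equivalence}. Let $k = D_B(\mathcal{H})$ and fix a Borel distinguishing coloring $c\colon Y\to Z$ with $|Z| = k$. Define $c' = c\circ f\colon X\to Z$. This map is Borel, being a composition of Borel maps. It remains to show that $c'$ is distinguishing for $\mathcal{G}$, and we will check condition (2) of Proposition \ref{equivalence}.

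\textbf{Step 1: relate the symbolic factor maps.} Since $f$ is equivariant, for every $x\in X$ and $g\in\Gamma$ we have
\[
\varphi_{c'}(x)(g) = c(f(g\cdot x)) = c(g\cdot f(x)) = \varphi_c(f(x))(g),
\]
so $\varphi_{c'} = \varphi_c\circ f$.

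\textbf{Step 2: injectivity.} By Proposition \ref{equivalence}, $\varphi_c$ is injective, and $f$ is injective by hypothesis. Hence $\varphi_{c'}$ is injective.

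\textbf{Step 3: the automorphism condition.} Suppose $\varphi_{c'}(x) = \gamma\cdot\varphi_{c'}(y)$ for some $\gamma\in A_e(\Gamma)\setminus\{\id\}$ and some $x,y\in X$. Then $\varphi_c(f(x)) = \gamma\cdot\varphi_c(f(y))$ with $f(x), f(y)\in Y$. This contradicts condition (2) of Proposition \ref{equivalence} applied to $c$ on $\mathcal{H}$.

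Both actions are by the same group $\Gamma$ with the same generating set $S$, so $A_e(\Gamma)$ is the same group in both applications of the proposition. Therefore $c'$ is a Borel distinguishing coloring of $\mathcal{G}$ with $k$ colors, and $D_B(\mathcal{G})\le D_B(\mathcal{H})$.

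I expect no real obstacle. The only points needing care are the equivariance computation in Step 1 and confirming that the proposition's hypotheses (free actions, standard Borel spaces) hold on both sides, which the statement assumes.
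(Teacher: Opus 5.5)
Your proof is correct and is essentially the paper's argument: both use $\varphi_{c\circ f}=\varphi_c\circ f$ and apply Proposition \ref{equivalence}. The only differences are that the paper checks the disjointness form (3) via $\varphi_c(f(X))\subseteq\varphi_c(Y)$ while you check the equivalent form (2), and you state explicitly that the resulting coloring is $c\circ f$, which the paper leaves implicit through Remark \ref{11}.
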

\begin{proof}
Suppose $c:Y \longrightarrow k$ is a Borel distinguishing coloring for $\mathcal{H}$. The map $c$ induces a symbolic factor map $\phi_c :Y \longrightarrow k^\Gamma$. Then $\sigma=\phi_c \circ f$ is an injective equivariant Borel map from $X$ to $k^\Gamma$. Since $c$ is a distinguishing coloring for $\mathcal{H}$, by Proposition \ref{equivalence}, the family of sets $\{\gamma\cdot \phi_c(Y)\}_{\gamma\in A_e(\Gamma)}$ is pairwise disjoint.
Given that $f(X) \subseteq Y$, we have $\sigma(X) \subseteq \phi_c(Y)$. For any distinct $\gamma_1, \gamma_2 \in A_e(\Gamma)$, we have:
\[ \gamma_1\cdot \sigma(X) \cap \gamma_2\cdot \sigma(X) \subseteq \gamma_1\cdot \phi_c(Y) \cap \gamma_2\cdot \phi_c(Y) = \emptyset. \]
Thus, $\sigma$ satisfies condition $(3)$ of Proposition \ref{equivalence} for $\mathcal{G}$, which implies $D_B(\mathcal{G})\le D_B(\mathcal{H})$.
\end{proof}

\section{The Lower Bound for Amenable Group Shifts}
 
In this section, we begin to prove Theorem \ref{12}. Recall that a countable discrete group $\Gamma$ is said to be \textit{amenable} if it admits a  F{\o}lner sequence, that is, a sequence of non-empty finite subsets $\{F_n\}_{n\in\mathbb{N}}$ of $\Gamma$ such that for every  $
 g\in \Gamma$,
\begin{equation}
    \lim_{n\to\infty} \frac{|gF_n \triangle F_n|}{|F_n|} = 0.
\end{equation}
 
 Our tools come from the entropy theory for amenable groups developed by Ornstein and Weiss \cite{OW87}. Kerr and Li published an elegant textbook \cite{KL16} to give a complete introduction to amenable entropy theory; this textbook will be the main reference for this section.

 We only list some theorems and give a short introduction to measure entropy in order to prove Lemma \ref{uniqueness} and Theorem~\ref{lowerbound}.
  For more details on measure entropy and topological entropy of amenable group actions, see \cite[Definition 9.3 , Definition 9.28]{KL16}.
 
 We assume $\Gamma$ is a countably infinite amenable group. Let $(X, \mu)$ be a probability measure space, consider an action $\Gamma \curvearrowright (X, \mu)$, and let $a$ denote this action. The tuple $(\Gamma,X,\mu,a)$ is said to be a \textit{(measure) dynamical system}.
 We say this action is a measure-preserving action if $\mu(A)=\mu(g\cdot A)$ for every $g$ and every measurable set $A$. Equivalently, we say $\mu$ is $\Gamma$-invariant. Throughout this section, all actions  are assumed to be measure-preserving. When there is no ambiguity about the action, we usually omit $a$. 
 For a finite measurable partition $\mathcal{P}$ of $X$, let $H_\mu(\mathcal{P}) = -\sum_{A \in \mathcal{P}} \mu(A) \log \mu(A)$. For a finite measurable partition $\mathcal{P}$ of $X$ and a finite set $F\subseteq\Gamma$, we write $\mathcal{P}^F=\bigvee_{g\in F}g^{-1}\mathcal{P} $ for the join (see \cite[page 2]{KL16}) of the partitions $g^{-1}\mathcal{P}$, where $ g^{-1}\mathcal{P}=\{g^{-1}P:P\in\mathcal{P}\}. $

Let $(F_n)$ be a F{\o}lner sequence of $\Gamma$. We write $K\Subset\Gamma$ to mean that $K$ is a finite subset of $\Gamma$.
Define
    \[
    h_{\mu}(\Gamma, \mathcal{P}) = \lim_{n\to\infty} \frac{H_\mu(\mathcal{P}^{F_n})}{|F_n|} =\inf_{\varnothing\ne K\Subset\Gamma}\frac{H_\mu(\mathcal{P}^K)}{|K|}.
    \]
The limit exists and is independent of the choice of the F{\o}lner sequence. The \textit{measure entropy} of the dynamical system $(\Gamma,X,\mu,a)$ is defined as the supremum  over all finite measurable partitions:
    \[
    h_{\mu}(X) = \sup_{\mathcal{P}} h_{\mu}(\Gamma, \mathcal{P}).
    \]
From the definition, we can see that measure entropy is invariant under measure-theoretic isomorphism: if two actions are equivariantly isomorphic modulo null sets, then they have the same measure entropy.

\begin{theorem}[{\cite[Theorem 9.9]{KL16}}]
Let $\Gamma$ be a countable amenable group, and let $\lambda$ be the uniform Bernoulli measure on $n^\Gamma$. Suppose $\Gamma$ acts on $n^\Gamma$ by the shift action. Then the measure entropy is $\log n$.
\end{theorem}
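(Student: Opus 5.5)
We must prove that the uniform Bernoulli shift on $n^\Gamma$ has measure entropy $\log n$, for $\Gamma$ countable amenable. Let $\mathcal{P}=\{P_0,\dots,P_{n-1}\}$ be the time-zero partition, $P_i=\{x\in n^\Gamma : x(e)=i\}$. The plan is to prove the lower bound $h_\lambda(n^\Gamma)\ge\log n$ by computing $h_\lambda(\Gamma,\mathcal{P})$ exactly. We then prove the upper bound by showing $\mathcal{P}$ is generating and invoking the Kolmogorov--Sinai theorem for amenable groups.

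\textbf{Computing $h_\lambda(\Gamma,\mathcal{P})$.} For a finite $F\subseteq\Gamma$, the cells of $\mathcal{P}^F=\bigvee_{g\in F}g^{-1}\mathcal{P}$ are exactly the cylinder sets $\{x : x|_F=\omega\}$ with $\omega\in n^F$. To see this, note that $g^{-1}P_i=\{x : (g\cdot x)(e)=i\}=\{x: x(g)=i\}$ under the right shift. Since $\lambda$ is the uniform product measure, each of the $n^{|F|}$ cylinders has measure $n^{-|F|}$. Hence
\[
H_\lambda(\mathcal{P}^F)=|F|\log n ,
\]
and dividing by $|F_k|$ along a F{\o}lner sequence gives $h_\lambda(\Gamma,\mathcal{P})=\log n$. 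In particular $h_\lambda(n^\Gamma)\ge\log n$.

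\textbf{Upper bound.} Let $\mathcal{Q}$ be any finite measurable partition. Since $\mathcal{P}$ generates the Borel $\sigma$-algebra under the action (cylinders separate points and generate the product $\sigma$-algebra), the amenable Kolmogorov--Sinai theorem gives $h_\lambda(\Gamma,\mathcal{Q})\le h_\lambda(\Gamma,\mathcal{P})$. If I want to avoid quoting it, I would argue directly as follows.

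\begin{enumerate}
\item Fix $\varepsilon>0$. Approximate $\mathcal{Q}$ by a partition $\mathcal{Q}'$ measurable with respect to $\mathcal{P}^K$ for some finite $K$, so that the conditional entropy satisfies $H_\lambda(\mathcal{Q}\mid\mathcal{Q}')<\varepsilon$.
\item Use subadditivity:
\[
H(\mathcal{Q}^{F})\le H(\mathcal{Q}'^{F})+|F|\,H(\mathcal{Q}\mid\mathcal{Q}').
\]
\item Note that $\mathcal{Q}'^{F}$ is coarser than $\mathcal{P}^{KF}$, so $H(\mathcal{Q}'^F)\le |KF|\log n$.
\item Observe that the F{\o}lner property gives $|KF_k|/|F_k|\to 1$.
\end{enumerate}

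Dividing by $|F_k|$ and letting $k\to\infty$ gives $h_\lambda(\Gamma,\mathcal{Q})\le\log n+\varepsilon$. Since $\varepsilon$ is arbitrary, taking the supremum over $\mathcal{Q}$ yields $h_\lambda(n^\Gamma)\le\log n$.

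The main obstacle is this upper bound. Specifically, care is needed in the approximation step and in the conditional-entropy inequality $H(\mathcal{Q}^F\mid\mathcal{Q}'^F)\le\sum_{g\in F}H(g^{-1}\mathcal{Q}\mid g^{-1}\mathcal{Q}')$, and in checking that the order of multiplication in $KF$ matches the right-shift convention. With the right shift one must confirm whether $KF_k$ or $F_kK$ is the relevant set. If it is $F_kK$, one instead uses a right F{\o}lner sequence, obtained by inverting, since entropy is independent of the F{\o}lner sequence.
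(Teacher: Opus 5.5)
Your proof is correct, but it takes a different route from the paper. The paper gives no argument of its own. It quotes \cite[Theorem 9.9]{KL16}, which says a Bernoulli action $\Gamma\curvearrowright(Y^\Gamma,\nu^\Gamma)$ has entropy equal to the Shannon entropy $H(\nu)$ of the base. It then evaluates $H(\nu)=-\sum_{i<n}\frac1n\log\frac1n=\log n$ for the uniform $\nu$, and notes that restricting to the conull free part changes nothing.

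You instead prove the finite-alphabet case from the definitions. You compute $H_\lambda(\mathcal{P}^F)=|F|\log n$ exactly for the time-zero partition. For the upper bound you use either the Kolmogorov--Sinai theorem \cite[Theorem 9.8]{KL16} for the generating partition $\mathcal{P}$, or your direct approximation argument. These are exactly the ingredients the paper itself uses in the proof of Lemma~\ref{uniqueness}. With Theorem 9.8 the statement is immediate: $h_\lambda(n^\Gamma)=h_\lambda(\Gamma,\mathcal{P})=\inf_K H_\lambda(\mathcal{P}^K)/|K|=\log n$.

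So your version makes this point of Section 4 self-contained. The citation instead buys generality: it covers base measures with no finite generating partition, which is where the real work in \cite{KL16} lies. The upper bound would also follow from the variational principle together with the topological entropy $\log n$, both quoted in the paper, but that is heavier machinery than Kolmogorov--Sinai.

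On the worry you flagged about $KF_k$ versus $F_kK$: since $\mathcal{Q}'$ is coarser than $\mathcal{P}^K$, each $g^{-1}\mathcal{Q}'$ is coarser than $g^{-1}\mathcal{P}^K=\bigvee_{k\in K}(kg)^{-1}\mathcal{P}$. Hence $\mathcal{Q}'^F$ is coarser than $\mathcal{P}^{KF}$, exactly as you wrote. The paper's left F{\o}lner condition $|gF_k\triangle F_k|/|F_k|\to 0$ then gives $|KF_k|/|F_k|\to 1$ directly. Your fallback to inverted F{\o}lner sequences is therefore unnecessary. It would also need its own justification, since the limit defining $h_\mu(\Gamma,\mathcal{P})$ is taken along left F{\o}lner sequences.
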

\begin{remark}
Theorem 9.9 in \cite{KL16} states that the measure entropy equals the Shannon entropy $H(\nu)$, where $\nu$ is a probability measure on $n$. Here $\lambda$ is the product measure $\nu^\Gamma$. We can easily calculate the result by the definition of Shannon entropy in Section 9.1. Also note that we consider the free part of $\Gamma \curvearrowright n^\Gamma$, which is well known to be a conull set of $n^\Gamma$; hence restricting the underlying space to the free part does not change the measure entropy of this system. 
\end{remark}
 
\begin{theorem}[{\cite[Example 9.41]{KL16}}]
Let $\Gamma$ be a countable amenable group, and suppose $\Gamma$ acts on $n^\Gamma$ by the shift action. Then the topological entropy is $\log n$.
\end{theorem}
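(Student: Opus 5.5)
The statement is that the topological entropy of the full shift $\Gamma\curvearrowright n^\Gamma$ equals $\log n$. I would prove it by computing directly with the open cover (equivalently, partition) generated by the coordinate at $e$, using the definition of topological entropy for amenable group actions from \cite[Definition 9.28]{KL16}.

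Let $\mathcal{U}=\{U_0,\dots,U_{n-1}\}$ with $U_i=\{x\in n^\Gamma : x(e)=i\}$. This is a clopen partition, hence a finite open cover. For a finite $F\subseteq\Gamma$, the join $\mathcal{U}^F=\bigvee_{g\in F}g^{-1}\mathcal{U}$ consists of the cylinder sets determined by the coordinates in $F$. (With the right shift one has $g^{-1}U_i=\{x: x(g^{-1})=i\}$, up to the convention, so the relevant index set is $F$ or $F^{-1}$, which has the same cardinality.) All $n^{|F|}$ cylinders are nonempty and pairwise disjoint. Therefore the minimal cardinality $N(\mathcal{U}^F)$ of a subcover is exactly $n^{|F|}$. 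Along a F{\o}lner sequence this gives
\[
h_{\mathrm{top}}(\mathcal{U})=\lim_k \frac{\log N(\mathcal{U}^{F_k})}{|F_k|}=\log n .
\]
Hence $h_{\mathrm{top}}\ge \log n$.

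For the upper bound, I would use that $\mathcal{U}$ is a generator: for any open cover $\mathcal{V}$ of the compact metrizable space $n^\Gamma$, take a Lebesgue number. Some join $\mathcal{U}^K$ over a finite $K$ then refines $\mathcal{V}$, because cylinders on a large finite window have small diameter. For any cover $\mathcal{W}$ one has $N(\mathcal{V}^{F})\le N(\mathcal{W}^{F})$ whenever $\mathcal{W}$ refines $\mathcal{V}$, and $\mathcal{U}^{KF}$ (suitably ordered) refines $\mathcal{V}^F$. This gives
\[
\log N(\mathcal{V}^{F_k})\le \log N(\mathcal{U}^{F_kK})=|F_kK|\log n .
\]
The F{\o}lner property gives $|F_kK|/|F_k|\to 1$, so $h_{\mathrm{top}}(\mathcal{V})\le\log n$. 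Taking the supremum over $\mathcal{V}$ yields $h_{\mathrm{top}}=\log n$.

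Alternatively, I could cite the variational principle together with the measure computation above. That route gives the lower bound $\ge\log n$ via the Bernoulli measure, but it also needs the upper bound, so the direct computation above is cleaner.

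The main obstacle is the upper-bound step, specifically the bookkeeping between left and right multiplication in the join under the right shift convention $(g\cdot x)(h)=x(hg)$. I would fix the convention by checking which translate of $K$ appears, then invoke the F{\o}lner condition on the appropriate side. Since $|F_kK\setminus F_k|\le\sum_{t\in K}|F_kt\setminus F_k|$, I need right F{\o}lner sets, or equivalently left F{\o}lner sets after inverting. Beyond that, everything is routine.
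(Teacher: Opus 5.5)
The paper gives no proof of this statement. It quotes Kerr--Li \cite[Example 9.41]{KL16} and uses the result only as an input to the variational principle. Your argument is the standard direct computation, and it is correct apart from one slip in the upper bound that you yourself flagged. The computation has three parts: the coordinate cover $\mathcal{U}$ is a clopen generator, $N(\mathcal{U}^F)=n^{|F|}$ because the cylinders are nonempty and pairwise disjoint, and every finite open cover is refined by some $\mathcal{U}^K$.

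The slip concerns the order of the product set. For any left action, since $(ts)^{-1}=s^{-1}t^{-1}$,
\[
\bigvee_{s\in F}s^{-1}\Bigl(\bigvee_{t\in K}t^{-1}\mathcal{U}\Bigr)=\bigvee_{u\in KF}u^{-1}\mathcal{U}=\mathcal{U}^{KF}.
\]
So the correct bound is $\log N(\mathcal{V}^{F_k})\le |KF_k|\log n$, not $|F_kK|\log n$. Your sentence about $\mathcal{U}^{KF}$ already had this right; the display should match it.

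With $KF_k$, the estimate $|KF_k\setminus F_k|\le\sum_{t\in K}|tF_k\setminus F_k|$ is controlled by exactly the left F{\o}lner condition (1) of the paper. No right F{\o}lner sets are needed. The concrete shift convention $(g\cdot x)(h)=x(hg)$ plays no role at this step: it only decides whether the cylinders of $\mathcal{U}^F$ sit on $F$ or on $F^{-1}$, not how many there are.

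The ordering does matter, though. Entropy is computed along left F{\o}lner sequences, and for a non-abelian $\Gamma$ such a sequence can have $|F_kK|/|F_k|\not\to 1$; right-translating F{\o}lner sets in the Heisenberg group gives an example. So the bound with $F_kK$ would genuinely fail. Your suggested fix of inverting to get right F{\o}lner sets is also not available, since $F_k^{-1}$ is not an admissible sequence for the entropy limit. With $KF_k$ in place, your proof is complete.
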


The connection between topological and measure entropy is established by the following theorem, which is known as the Variational Principle. 

\begin{theorem}[{\cite[Theorem 9.48]{KL16}}]\label{variational}
Let $\Gamma$ be a countable amenable group, and let $\Gamma \curvearrowright X$ be a continuous action on a compact metric space $X$. Then the topological entropy is the supremum of the measure entropy over all $\Gamma$-invariant probability measures.
\end{theorem}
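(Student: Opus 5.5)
The statement is the amenable Variational Principle, which the paper quotes from \cite[Theorem 9.48]{KL16}. If I were to prove it rather than cite it, I would follow the Misiurewicz approach adapted to F{\o}lner sequences. I would write $h_{\mathrm{top}}(X)$ for the topological entropy and prove the two inequalities $\sup_\mu h_\mu(X)\le h_{\mathrm{top}}(X)$ and $h_{\mathrm{top}}(X)\le\sup_\mu h_\mu(X)$ separately. Throughout I would fix a F{\o}lner sequence $(F_n)$ and use the infimum formula $h_\mu(\Gamma,\mathcal P)=\inf_K H_\mu(\mathcal P^K)/|K|$, together with the analogous subadditivity and infimum formula for open covers. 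I would also use a characterization of topological entropy via $(\rho,F)$-separated sets, where $\rho_F(x,y)=\max_{g\in F}\rho(gx,gy)$.

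For $h_\mu(X)\le h_{\mathrm{top}}(X)$, fix an invariant $\mu$ and a finite partition $\mathcal P=\{P_1,\dots,P_k\}$. By regularity of $\mu$, choose compact sets $Q_i\subseteq P_i$ with $\mu(P_i\setminus Q_i)$ small, and set $Q_0=X\setminus\bigcup_i Q_i$ and $\mathcal Q=\{Q_0,\dots,Q_k\}$. Then $H_\mu(\mathcal P\mid\mathcal Q)$ is small. The sets $U_i=Q_0\cup Q_i$ are open and form a cover $\mathcal U$, and each member of $\mathcal U^F$ meets at most $2^{|F|}$ atoms of $\mathcal Q^F$. Hence
\[
H_\mu(\mathcal Q^F)\le \log N(\mathcal U^F)+|F|\log 2 .
\]
Dividing by $|F_n|$ gives $h_\mu(\Gamma,\mathcal P)\le h_{\mathrm{top}}(\mathcal U)+\log2+\varepsilon$. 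To remove the $\log 2$, I would use the standard trick of replacing $\Gamma$-entropy by entropy along a "power" of the action. This is not directly available for a general amenable group. Instead, I would apply the argument to $\mathcal P^K$ for a large finite $K$ in place of $\mathcal P$, using $h_\mu(\Gamma,\mathcal P^K)=h_\mu(\Gamma,\mathcal P)$, and count with respect to F{\o}lner tiles of shape $K$. The additive error then becomes $\log 2/|K|$, which tends to $0$. This bookkeeping, via an Ornstein--Weiss quasitiling or a direct counting argument over a $K$-grid in $F_n$, is where I expect the technical work for this direction to lie.

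For $h_{\mathrm{top}}(X)\le\sup_\mu h_\mu(X)$, fix $\rho>0$ and let $E_n$ be a maximal $(\rho,F_n)$-separated set, so that $\limsup_n \frac1{|F_n|}\log|E_n|$ approaches $h_{\mathrm{top}}$ as $\rho\to0$. Let $\sigma_n$ be the uniform measure on $E_n$, and set $\mu_n=\frac1{|F_n|}\sum_{g\in F_n} g_*\sigma_n$. Choose a weak$^*$ limit point $\mu$ along a subsequence realizing the $\limsup$. The F{\o}lner property gives $\|s_*\mu_n-\mu_n\|\le|sF_n\triangle F_n|/|F_n|\to0$, so $\mu$ is invariant. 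Next, choose a partition $\mathcal P$ of diameter less than $\rho$ whose atoms have $\mu$-null boundaries. Then each atom of $\mathcal P^{F_n}$ contains at most one point of $E_n$, so $H_{\sigma_n}(\mathcal P^{F_n})=\log|E_n|$.

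The main obstacle is transferring this equality from $\sigma_n$ to $\mu$. Fix a finite $K$ and tile $F_n$ by translates $Kc$, up to a small proportion of $F_n$; for a general amenable group this needs the Ornstein--Weiss quasitiling lemma, or the simpler averaging over all right translates $c$ used in \cite{KL16}. Subadditivity and concavity of $H$ then give
\[
\frac{\log|E_n|}{|F_n|}\le \frac{1}{|K|}H_{\mu_n}(\mathcal P^K)+o(1).
\]
Letting $n\to\infty$ and using the null boundaries, $H_{\mu_n}(\mathcal P^K)\to H_\mu(\mathcal P^K)$. Taking the infimum over $K$ yields $h_\mu(\Gamma,\mathcal P)\ge\limsup_n\frac1{|F_n|}\log|E_n|$, and letting $\rho\to0$ completes the proof.
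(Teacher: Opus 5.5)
The paper gives no proof of this statement; it only quotes \cite[Theorem 9.48]{KL16}. Judged on its own, your outline has the standard architecture and most steps are sound. In particular, $\|s_*\mu_n-\mu_n\|\le|sF_n\triangle F_n|/|F_n|$ uses exactly the paper's left F{\o}lner condition, and $H_{\sigma_n}(\mathcal P^{F_n})=\log|E_n|$ is right.

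The gap is at the step you call the main obstacle. The inequality $\frac{\log|E_n|}{|F_n|}\le\frac1{|K|}H_{\mu_n}(\mathcal P^K)+o(1)$ does not follow from subadditivity and concavity along either of your routes.
\begin{itemize}
\item If you (quasi)tile $F_n$ by translates $Kc$, $c\in C$, subadditivity gives $\sum_{c\in C}H_{c_*\sigma_n}(\mathcal P^K)$. Concavity turns this into $|C|\,H_{\bar\sigma_n}(\mathcal P^K)$ with $\bar\sigma_n=\frac1{|C|}\sum_{c\in C}c_*\sigma_n$. This averages over the sparse set of centres, not over $F_n$, so it is not $\mu_n$, and its limit need not be invariant. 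In $\mathbb Z$ (with $K=\{0,\dots,q-1\}$), Misiurewicz fixes this by averaging over the $q$ shifts of the grid, i.e.\ by splitting the family of all translates into $q$ tilings. For a general amenable group and shape $K$, no such splitting is available.
\item If you instead use all right translates $Kc$, $c\in F_n$, they overlap. Subadditivity then only gives $H_{\sigma_n}(\mathcal P^{F_n'})\le\sum_{c\in F_n}H_{\sigma_n}(\mathcal P^{Kc})$, which misses exactly the factor $1/|K|$ you need.
\end{itemize}

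The missing ingredient is Shearer's inequality. If an indexed family $\mathcal A$ of subsets of a finite set $F'$ covers each point at least $r$ times, then $H_\sigma(\mathcal P^{F'})\le\frac1r\sum_{A\in\mathcal A}H_\sigma(\mathcal P^{A})$ for every probability measure $\sigma$. This comes from the chain rule and monotonicity of conditional entropy, not from subadditivity. Take $F_n'=\{h\in F_n:K^{-1}h\subseteq F_n\}$ and $\mathcal A=\{Kc\cap F_n'\}_{c\in F_n}$, which covers each point of $F_n'$ exactly $|K|$ times. Since $\mathcal P^{Kc}=c^{-1}\mathcal P^K$, concavity gives
\[
\frac{\log|E_n|}{|F_n|}\le\frac1{|K|}H_{\mu_n}(\mathcal P^K)+\frac{|F_n\setminus F_n'|}{|F_n|}\log|\mathcal P|.
\]
The last term tends to $0$ because $F_n\setminus F_n'\subseteq\bigcup_{k\in K}(F_n\setminus kF_n)$. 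No quasitiling is needed, and the rest of your second direction then goes through as written.

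In the first direction, quasitiling does suffice, because $\mu$ is already invariant and only the number of tiles matters. Ornstein--Weiss shapes of size at least $N$ give at most $|F|/((1-\varepsilon)N)$ tiles. Note that a single shape $K$ or a ``$K$-grid'' is not available in general, so you need several shapes. You can also avoid tilings there:
\begin{itemize}
\item By invariance and Markov's inequality, off a set of measure at most $\delta=\sqrt{\mu(Q_0)}$, a point visits $Q_0$ at most $\delta|F|$ times along $F$.
\item Each member of $\mathcal U^F$ then meets at most $\sum_{j\le\delta|F|}\binom{|F|}{j}$ of the corresponding atoms of $\mathcal Q^F$.
\end{itemize}
So the per-coordinate error $\log 2$ is replaced by one that tends to $0$ with $\mu(Q_0)$.
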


We also need the following easy but critical fact. Since we have not found a reference, we give a proof here for completeness.

\begin{lemma}\label{uniqueness}
Let $\Gamma$ be an amenable group, and let $\Gamma \curvearrowright n^\Gamma$ be the shift action. Then the Bernoulli measure $\lambda$ is the unique $\Gamma$-invariant Borel probability measure whose measure entropy is $\log n$.
\end{lemma}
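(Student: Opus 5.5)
We prove that any invariant $\mu$ with $h_\mu(n^\Gamma)=\log n$ equals $\lambda$. The key is to work with the canonical generating partition and compare with the uniform distribution on finite windows. Let $\mathcal{P}=\{P_0,\dots,P_{n-1}\}$ with $P_i=\{x\in n^\Gamma : x(e)=i\}$. For finite $K\subseteq\Gamma$, the atoms of $\mathcal{P}^K$ are exactly the cylinder sets $[\omega]=\{x : x|_K=\omega\}$, $\omega\in n^K$. Hence there are at most $n^{|K|}$ atoms, and $H_\mu(\mathcal{P}^K)\le |K|\log n$, with equality iff $\mu$ is uniform on the cylinders over $K$.

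Since $\mathcal{P}$ is generating for the shift, the Kolmogorov--Sinai theorem for amenable groups (see \cite{KL16}) gives $h_\mu(n^\Gamma)=h_\mu(\Gamma,\mathcal{P})$. So the hypothesis reads
\[ \inf_{\varnothing\ne K\Subset\Gamma}\frac{H_\mu(\mathcal{P}^K)}{|K|}=\log n. \]
Combined with the upper bound above, this forces $H_\mu(\mathcal{P}^K)=|K|\log n$ for \emph{every} nonempty finite $K$. This exact equality for all $K$ comes from the infimum formula for $h_\mu(\Gamma,\mathcal{P})$ stated in the paper, not merely from a limit along a F{\o}lner sequence. This is the point where care is needed, and it is the main step. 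If one only had the F{\o}lner limit, one would need subadditivity (Shearer-type) arguments to pass to arbitrary $K$. The infimum characterization quoted in the preliminaries handles this directly: $H_\mu(\mathcal{P}^K)/|K|\ge\log n$ for each $K$, and the reverse inequality always holds.

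By strict concavity of Shannon entropy, equality at the maximum means $\mu([\omega])=n^{-|K|}$ for all $\omega\in n^K$. This is exactly $\lambda([\omega])$. Cylinder sets form a $\pi$-system generating the Borel $\sigma$-algebra of $n^\Gamma$, so the $\pi$--$\lambda$ theorem gives $\mu=\lambda$. Conversely, $\lambda$ is invariant and has entropy $\log n$ by \cite[Theorem 9.9]{KL16}, so it is indeed the unique such measure.

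If one prefers to avoid invoking the Kolmogorov--Sinai theorem, it suffices to show $h_\mu(\Gamma,\mathcal{Q})\le h_\mu(\Gamma,\mathcal{P})$ for every finite partition $\mathcal{Q}$. This follows from approximating $\mathcal{Q}$ by partitions measurable with respect to $\mathcal{P}^{L}$ for finite $L$, and using $h_\mu(\Gamma,\mathcal{P}^L)=h_\mu(\Gamma,\mathcal{P})$ together with continuity of $h_\mu(\Gamma,\cdot)$ in the Rokhlin metric. Only the inequality $h_\mu\le h_\mu(\Gamma,\mathcal{P})$ is needed to conclude.
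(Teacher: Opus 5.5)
Your proof is correct and is essentially the paper's argument. Both use the generating partition $\mathcal{P}$, the Kolmogorov--Sinai theorem, the infimum formula for $h_\mu(\Gamma,\mathcal{P})$, and the fact that $H_\mu(\mathcal{P}^K)=|K|\log n$ holds only for the uniform distribution on cylinders; the paper argues by contradiction from a single cylinder with $\mu\ne\lambda$, while you run the contrapositive directly and make the $\pi$--$\lambda$ step explicit.
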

\begin{proof}
 Suppose for contradiction that there is a $\Gamma$-invariant Borel probability measure $\mu\ne\lambda$ such that $h_\mu(n^\Gamma)=\log n$. Since the Borel $\sigma$-algebra of $n^\Gamma$ is the product $\sigma$-algebra, there exists a cylinder set of the form $\{x\in n^\Gamma:x|_F=a\}$ such that $\mu(\{x\in n^\Gamma:x|_F=a\})\ne\lambda(\{x\in n^\Gamma:x|_F=a\})=n^{-|F|}$, where $F\subseteq\Gamma$ is finite,  $a\in n^F$, and $x|_F$ means the restriction of the map $x$ to $F$. Let $\mathcal{P}=\{P_0,\ldots,P_{n-1}\}$, where $P_i=\{x\in n^\Gamma:x(e)=i\}$. It is easy to see that $\mathcal{P}$ is a finite generating partition (see \cite[Definition 9.7]{KL16} ) of $n^\Gamma$. Then $\mathcal{P}^F=\bigvee_{g\in F}g^{-1}\mathcal{P}$ is the finite partition whose elements are cylinder sets $\{x\in n^\Gamma:x|_F=b\}$ for $b\in n^F$. Since one of them does not have measure $n^{-|F|}$, the elements of $\mathcal{P}^F$ do not all have the same measure. Combining this with \cite[Proposition 9.1 (2)]{KL16}, we have
\[
H_\mu(\mathcal{P}^F)<\log|\mathcal{P}^F|=\log(n^{|F|})=|F|\log n.
\]
By  \cite[Theorem 9.8]{KL16}, we have
\[
h_\mu(n^\Gamma)=h_{\mu}(\Gamma, \mathcal{P})=\inf_{\varnothing\ne K\Subset\Gamma}\frac{H_\mu(\mathcal{P}^K)}{|K|}\le\frac{H_\mu(\mathcal{P}^F)}{|F|}<\log n,
\]
which contradicts the assumption that $h_\mu(n^\Gamma)=\log n$. Hence $\mu=\lambda$, and the uniqueness follows. \end{proof}

Now we begin to prove the lower bound.

\begin{theorem}\label{lowerbound}
Let $\Gamma$ be a finitely generated amenable group and let $n\ge 2$ be an integer. Let $\mathcal{G}=(X,E)$ be the Schreier graph induced by the free part of the shift action $\Gamma \curvearrowright n^\Gamma$. Then:
\begin{enumerate}
    \item  $D_{B}(\mathcal{G})\ge n$.
    \item  If $A_{e}(\Gamma)$ is non-trivial, then $D_{B}(\mathcal{G}) \ge n+1$.
\end{enumerate}
\end{theorem}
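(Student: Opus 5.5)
We argue by contradiction in both parts. Suppose $f\colon X\to Y$ is a Borel distinguishing coloring with $|Y|=k$ colors. By Remark \ref{11} and Proposition \ref{equivalence}, $\varphi_f\colon X\to k^\Gamma$ is then a Borel $\Gamma$-equivariant injection, and the sets $\gamma\cdot\varphi_f(X)$, $\gamma\in A_e(\Gamma)$, are pairwise disjoint. Let $\lambda$ be the uniform Bernoulli measure on $n^\Gamma$. It is conull on $X$ (the free part), so we push it forward to $\mu=(\varphi_f)_*\lambda$. This is a $\Gamma$-invariant Borel probability measure on $k^\Gamma$, because $\varphi_f$ is equivariant. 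Since $\varphi_f$ is a Borel injection, its image is Borel (Lusin--Souslin) and $\varphi_f$ is a Borel isomorphism onto its image. Hence $(\Gamma,X,\lambda)$ and $(\Gamma,k^\Gamma,\mu)$ are measure-theoretically isomorphic, and $h_\mu(k^\Gamma)=\log n$.

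For (1), suppose $k\le n-1$. The shift $\Gamma\curvearrowright k^\Gamma$ is a continuous action on a compact metric space, and its topological entropy is $\log k$. By the Variational Principle (Theorem \ref{variational}), every invariant measure on $k^\Gamma$ has entropy at most $\log k<\log n$. This contradicts $h_\mu(k^\Gamma)=\log n$, so $D_B(\mathcal{G})\ge n$.

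For (2), assume $A_e(\Gamma)$ is nontrivial and suppose $k=n$. The same computation gives an invariant measure $\mu$ on $n^\Gamma$ with $h_\mu(n^\Gamma)=\log n$, so Lemma \ref{uniqueness} forces $\mu=\lambda$. Thus $\varphi_f(X)$ is a $\lambda$-conull Borel set. Now fix $\gamma\in A_e(\Gamma)\setminus\{\id\}$. We claim that $\lambda$ is invariant under the coordinate permutation $\alpha\mapsto\gamma\cdot\alpha$. The reason is that $\gamma$ is a bijection of $\Gamma$, and a product measure $\nu^\Gamma$ with identical factors is invariant under any permutation of coordinates; to check this, it suffices to test on cylinder sets, where $\gamma$ simply relabels the finite set $F$ to $\gamma(F)$, which has the same cardinality. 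Consequently $\gamma\cdot\varphi_f(X)$ is also $\lambda$-conull. Two conull sets intersect, and this contradicts the disjointness of $\varphi_f(X)$ and $\gamma\cdot\varphi_f(X)$ given by Proposition \ref{equivalence}(3). Hence $D_B(\mathcal{G})\ge n+1$.

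The points needing care are the measure-theoretic details, none of which should be a real obstacle:
\begin{itemize}
\item Entropy is preserved under the Borel embedding. For this we use that the image is Borel, that the inverse is Borel, and that equivariance gives an isomorphism of measure-preserving systems.
\item The pushforward $\mu$ lives on the whole of $n^\Gamma$, so Lemma \ref{uniqueness} applies to it directly.
\item For infinite $Y$, part (1) is trivial. Note that we never needed $k$ finite beyond applying the Variational Principle to $k^\Gamma$ with $k<n$.
\end{itemize}
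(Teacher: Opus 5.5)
Your proposal is correct and follows the paper's proof essentially step for step. You push the Bernoulli measure forward along the Borel equivariant injection $\varphi_f$, using Lusin--Souslin to get a measure isomorphism with entropy $\log n$, and contradict the Variational Principle bound $\log k$ when $k\le n-1$. For $k=n$ you use Lemma \ref{uniqueness} to get $(\varphi_f)_*\lambda=\lambda$, so that $\varphi_f(X)$ and $\gamma\cdot\varphi_f(X)$ are both $\lambda$-conull by the $A_e(\Gamma)$-invariance of $\lambda$, contradicting Proposition \ref{equivalence}(3). Your cylinder-set check of that invariance and your explicit reduction of part (2) to the case $k=n$ via part (1) are only minor elaborations of what the paper states.
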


\begin{proof}
Equip $n^\Gamma$ with the uniform Bernoulli measure $\lambda$. We equip $X$ with the restricted measure $\mu = \lambda|_X$, which has measure entropy $h_{\mu}(X)=\log n$. 

 Suppose for contradiction that $D_B(\mathcal{G}) \le n-1$. By Proposition \ref{equivalence}, there is a Borel equivariant injection $\varphi: X\longrightarrow (n-1)^\Gamma$. By the Luzin-Suslin theorem \cite[Theorem 15.1]{Kec95}, its image $A = \varphi(X)$ is a Borel set; hence $\varphi$ is a measure isomorphism between the systems $(X,\mu)$ and $(A,\nu)$, where $\nu = \varphi_* \mu$ is the pushforward measure. Since $A$ is a conull set of $(n-1)^{\Gamma}$ under $\nu$, $(X,\mu)$ and $((n-1)^{\Gamma},\nu)$ are measure-theoretically isomorphic. Therefore, $\nu$  achieves the  measure entropy $h_\nu((n-1)^\Gamma)=\log n$. 

However, by the variational principle (Theorem \ref{variational}), the measure entropy of any invariant probability measure on  $(n-1)^\Gamma$ is  bounded  above by its topological entropy, which is  $\log (n-1)$. It is easy to see that $\nu$ is an invariant measure on $(n-1)^{\Gamma}$; hence $\log n \le \log (n-1)$, a contradiction. Thus $D_B(\mathcal{G}) \ge n$.

Now we turn to the proof of (2). Suppose $A_e(\Gamma)$ is non-trivial. Assume for contradiction that $D_B(\mathcal{G}) \le n$. Then there exist a Borel distinguishing coloring $f:X\longrightarrow n$ and an induced Borel equivariant injection $\varphi_f:X\longrightarrow n^\Gamma$. By the same argument, $\nu = (\varphi_f)_*\mu$ on $n^\Gamma$  attains the measure entropy $h_\nu(n^\Gamma)=\log n$. By Lemma \ref{uniqueness},  $\nu=\lambda$. But $\nu$ is supported on the image $A=\varphi_f(X)$. Thus $\nu(A)=1\implies \lambda(A)=1$.
By assumption, there exists some non-trivial element $\gamma \in A_e(\Gamma) \setminus \{\id\}$. The Bernoulli measure $\lambda$ is  invariant under the $A_e(\Gamma)$ action, since each element in $A_e(\Gamma)$ is a bijection that only permutes coordinates, and the uniform Bernoulli  measure is
invariant under coordinate permutations. Thus, 
\[ \lambda(\gamma \cdot A)=\lambda(A)=1\implies\lambda (A\cap \gamma\cdot A)=1\implies A\cap\gamma\cdot A\ne \emptyset. \]
This contradicts condition (3)  in Proposition \ref{equivalence}.
\end{proof}

\section{Upper bound}
  Now we start to show the upper bound in the $\mathbb{Z}^d$ case for some integer $d\ge 1$. For any $d\ge 1$, a $d$-dimensional \textit{cube} $B$ in $\mathbb{Z}^d$ is a set of the form $B=[a_1,b_1]_\mathbb{Z}\times\dots\times [a_d,b_d]_\mathbb{Z}$, where $a_i<b_i$, $[a_i,b_i]_\mathbb{Z}$ is the integer interval,   and $b_1-a_1=\dots=b_d-a_d$ is the \textit{length} of $B$. For convenience, we only consider cubes with even length $k\ge 6$. The \textit{interior} of $B$, which we denote by $int(B)$, is $[a_1+1,b_1-1]_\mathbb{Z}\times\dots\times [a_d+1,b_d-1]_\mathbb{Z}$. We let $B\setminus int(B)$ be the boundary of $B$ and denote it by $\partial B$. A face of $B$ is a set of the form $\{x\in B:\pi_i(x)=a_i\}$ or $\{x\in B:\pi_i(x)=b_i\}$ for some $1\le i \le d$, where $\pi_i$ is the $i$-th projection map.
 Note that $int(B)$ is still a cube in $\mathbb{Z}^d$. 
 The \textit{core} of $B$ is the set $int(int(B))$, which we denote by $C(B)$. The \textit{middle layer} of $B$, which we denote by $M(B)$, is the set $int (B)\setminus C(B)$.

 For each positive even number $k$, there is a \textit{standard cube} of length $k$ for $\mathbb{Z}^d$ centered at $0$, that is, the cube $Q_k={[-\frac{k}{2},\frac{k}{2}]_\mathbb{Z}}^d$. We denote it by $Q_k$. A \textit{pattern} $p$ is defined as a map from $M(Q_k)$ to some integer $n$.  Every cube $B$ of length $k$ in $\mathbb{Z}^d$ can be written as $Q_k+g$ for a unique $g\in \mathbb{Z}^d$; we write it as $B_g$.  For a coloring $\alpha:\mathbb{Z}^d\longrightarrow n$, we say $\alpha$ has pattern $p$ on $B_g$ if $\alpha|_{M(B_g)}:M(B_g)\longrightarrow n$ satisfies $\alpha(h+g)=p(h)$ for every $h\in M(Q_k)$.

In this section, we assume that $\mathbb{Z}^d$ is equipped with the standard generators $S=\{\pm e_1,\ldots,\pm e_d\}$. Let $0$ denote the unit element of $\mathbb{Z}^d$. We first show a critical lemma.
\begin{lemma}\label{Ae}
For every 
$\gamma\in A_e(\mathbb{Z}^d)$, there exist a permutation $\sigma$ of $\{1,\dots,d\}$ and a vector $\varepsilon=(\varepsilon_1,\ldots,\varepsilon_d)\in\{-1,1\}^d$ such that for each $x\in \mathbb{Z}^d$, $\gamma(x) $ can be written as $\sum_{i=1}^d \varepsilon_i x_i e_{\sigma(i)}.$
\end{lemma}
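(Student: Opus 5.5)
We identify the automorphism group of the grid graph $\Cay(\mathbb{Z}^d,S)$ fixing $0$ with the hyperoctahedral group by exploiting its $4$-cycles, and propagate coordinates by induction. Fix $\gamma\in A_e(\mathbb{Z}^d)$.

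\emph{Step 1 (neighbours of $0$).} Since $\gamma$ is a graph automorphism fixing $0$, it permutes the $2d$ neighbours $S=\{\pm e_1,\dots,\pm e_d\}$ of $0$. The key combinatorial fact is that two distinct neighbours $u,v$ of a vertex $x$ satisfy $u-x=-(v-x)$ if and only if $u$ and $v$ have exactly one common neighbour, namely $x$. Indeed, if $u-x=s$ and $v-x=t$ with $t\ne\pm s$, then $x+s+t$ is a second common neighbour, so the two lie on a $4$-cycle through $x$. If $t=-s$, then $u,v$ are at distance $2$ along a line and $x$ is their only common neighbour. Since this property is preserved by automorphisms, $\gamma$ maps antipodal pairs $\{e_i,-e_i\}$ to antipodal pairs. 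Hence there are $\sigma$ and $\varepsilon$ with $\gamma(e_i)=\varepsilon_i e_{\sigma(i)}$, and therefore $\gamma(-e_i)=-\varepsilon_i e_{\sigma(i)}$.

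\emph{Step 2 (rigidity).} Let $L$ be the linear map $L(x)=\sum_i \varepsilon_i x_i e_{\sigma(i)}$. Then $L\in A_e(\mathbb{Z}^d)$, and $\tau=L^{-1}\circ\gamma$ is an automorphism fixing $0$ and every element of $S$. We prove $\tau=\id$ by showing that an automorphism fixing a vertex $x$ and all its neighbours also fixes all neighbours of each neighbour $x+s$. Connectedness then gives $\tau=\id$ by induction on word length.

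Take $y=x+s$ fixed along with all neighbours of $x$. For $t\ne\pm s$, the vertex $y+t$ is the unique common neighbour of $y$ and $x+t$ other than $x$. Both $y$ and $x+t$ are fixed, so $y+t$ is fixed. The neighbour $y-s=x$ is fixed already. The remaining neighbour $y+s$ is the only neighbour of $y$ not yet accounted for, so it is fixed too. This uses $d\ge1$; in the case $d=1$ only the last argument is needed.

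\emph{Main obstacle.} The main difficulty is Step 1's characterisation of antipodal pairs through common-neighbour counts, which must be checked carefully, for instance that $u=x+s$ and $v=x+t$ with $t\ne\pm s$ have exactly the two common neighbours $x$ and $x+s+t$. Once this holds, the rigidity in Step 2 is a routine propagation argument, and we conclude $\gamma=L$, which is the asserted form.
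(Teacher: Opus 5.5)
Your proof is correct. Its first half matches the paper's. The paper separates antipodal from non-antipodal pairs in $S$ by counting shortest paths between them (one versus two), and for vertices at distance $2$ that count is exactly your common-neighbour count.

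The second half takes a different route. The paper stays metric throughout. It first uses uniqueness of the geodesic from $0$ to $ne_i$ to get $\gamma(ne_i)=n\varepsilon_i e_{\sigma(i)}$. It then recovers each coordinate of $\gamma(x)$ from the identity $d(x,-Ne_i)-d(x,Ne_i)=2x_i$ for large $N$, which relies on the explicit $\ell^1$ form of the path metric. You instead compose with the signed permutation $L$, correctly observing that $L\in A_e(\mathbb{Z}^d)$. You then show that an automorphism fixing $0$ and all its neighbours is trivial, by square completion: $x+s+t$ is the unique common neighbour of $x+s$ and $x+t$ other than $x$, and the last neighbour $y+s$ is forced by counting.

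What your route buys: it is purely local, never needing geodesics longer than $2$ or any distance formula. It avoids the somewhat informal induction along the axes in the paper. It also shows that $A_e(\mathbb{Z}^d)$ is exactly the group of signed permutations. What the paper's route buys: it computes $\gamma(x)$ directly, without needing to know beforehand that signed permutations are automorphisms.

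One check you left implicit is routine. In the antipodal case, a common neighbour $x+s+a=x-s+b$ with $a,b\in S$ forces $a-b=-2s$, hence $a=-s$ and $b=s$, so $x$ is indeed the only one.
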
 
\begin{proof}
  We first prove the claim for generators. Since $\gamma$ fixes the unit element $0$, it must map a generator to a generator.  In fact, for each opposite pair of generators $\{e_i,-e_i\}$, it must be mapped to an opposite pair $\{e_j,-e_j\}$. This follows because $\gamma$ preserves shortest paths: $e_i,-e_i$ have only one shortest path connecting them, whereas $u$ and $v$ have two whenever they form a non-opposite pair $u,v\in S$. We let $\sigma(i)=j$. Now there is an $\varepsilon_i$ such that $\gamma(e_i)=\varepsilon_ie_{\sigma(i)}$. We do this for all generators and get $\varepsilon$ and a permutation $\sigma$. 
  
  Then we show that this holds for each element on a coordinate axis. For every  $n>0$, there is a unique shortest path from $0$ to $ne_i$, which is $(0,e_i,2e_i,\dots,ne_i)$. Since the first step of the image path to $\gamma (ne_i)$ is $\varepsilon_ie_{\sigma(i)}$, by induction, we must have $ \gamma(ne_i)=n\varepsilon_i e_{\sigma(i)}$. This also holds for $n<0$.

  Finally, for each $x=(x_1,\ldots,x_d)\in \mathbb{Z}^d$, let $\gamma(x)=y=(y_1,\dots,y_d)$ and choose an integer $N>d(x,0)$. Since $d(0,x)=d(0,y)$, $N>|y_j|$ for every $j$. Now we compare the distances to points on each coordinate axis. For each $i$, we have
$d(x,-Ne_i)-d(x,Ne_i)=2x_i$.
On the other hand, $2x_i=d(x,-Ne_i)-d(x,Ne_i)=d(y,-N\varepsilon_i e_{\sigma(i)}\bigr)
  -d(y,N\varepsilon_i e_{\sigma(i)}\bigr)=2\varepsilon_i y_{\sigma(i)}.$
Therefore, $y_{\sigma(i)}=\varepsilon_i x_i$ for every $i$. Since $\sigma$ is a permutation, $y
=\sum_{i=1}^d y_{\sigma(i)}e_{\sigma(i)}$. It follows that
$\gamma(x)=y
=\sum_{i=1}^d \varepsilon_i x_i e_{\sigma(i)}.$
\end{proof}
\begin{corollary}\label{cube}
Let $\gamma\in A_e(\mathbb{Z}^d)$, and let
$B_g=Q_k+g$ be a cube of even length $k$ centered at
$g\in\mathbb{Z}^d$. Then $\gamma(B_g)=Q_k+\gamma(g).$ Precisely, $\gamma(\partial B_g)=\partial\gamma(B_g),
\gamma(M(B_g))=M(\gamma(B_g)),\gamma(C(B_g))=C(\gamma(B_g)).$
Furthermore, for every $u\in Q_k$,
$\gamma(u+g)=\gamma(u)+\gamma(g).$
\end{corollary}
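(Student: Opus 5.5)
By Lemma \ref{Ae}, every $\gamma\in A_e(\mathbb{Z}^d)$ is the restriction to $\mathbb{Z}^d$ of a linear map $L_\gamma$. Concretely, $L_\gamma(x)=\sum_{i=1}^d\varepsilon_i x_i e_{\sigma(i)}$, which is a signed permutation of the coordinates. The plan is to prove the last assertion first, since it is immediate: for $u\in Q_k$ and $g\in\mathbb{Z}^d$, additivity of $L_\gamma$ gives $\gamma(u+g)=\gamma(u)+\gamma(g)$. Hence $\gamma(B_g)=\gamma(Q_k)+\gamma(g)$, and it remains to show $\gamma(Q_k)=Q_k$ and that the boundary, middle layer and core are preserved.

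For the standard cube $Q_k=[-\tfrac k2,\tfrac k2]_\mathbb{Z}^d$, we have $x\in Q_k$ iff $\max_i|x_i|\le k/2$. The map $L_\gamma$ only permutes coordinates and flips signs, so it preserves the quantity $\|x\|_\infty=\max_i|x_i|$. Therefore $\gamma(Q_m)=Q_m$ for every even $m$, and also for odd-radius versions. The layers of $Q_k$ are described by this norm:
\begin{itemize}
\item $\partial Q_k=\{x:\|x\|_\infty=k/2\}$,
\item $int(Q_k)=\{x:\|x\|_\infty\le k/2-1\}$,
\item $M(Q_k)=\{x:\|x\|_\infty=k/2-1\}$,
\item $C(Q_k)=\{x:\|x\|_\infty\le k/2-2\}$.
\end{itemize}
Each of these is invariant under $\gamma$, so $\gamma(\partial Q_k)=\partial Q_k$, $\gamma(M(Q_k))=M(Q_k)$ and $\gamma(C(Q_k))=C(Q_k)$.

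Finally, translate by $g$. The cube $B_g$ has $\partial B_g=\partial Q_k+g$, and similarly $M(B_g)=M(Q_k)+g$ and $C(B_g)=C(Q_k)+g$, because interior, core and boundary commute with translation. Applying the additivity formula then gives
\[
\gamma(\partial B_g)=\partial Q_k+\gamma(g)=\partial(Q_k+\gamma(g))=\partial\gamma(B_g),
\]
and the same argument handles $M$ and $C$.

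None of these steps is a serious obstacle. The only point needing care is the additivity $\gamma(u+g)=\gamma(u)+\gamma(g)$, and Lemma \ref{Ae} gives it directly through the linear formula.
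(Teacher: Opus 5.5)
Your proof is correct and is essentially the argument the paper intends. The paper states this corollary without proof as an immediate consequence of Lemma~\ref{Ae}, and your verification is the natural one: additivity of the signed-permutation formula, plus invariance of $\max_i|x_i|$, whose level and sublevel sets are exactly $\partial Q_k$, $M(Q_k)$ and $C(Q_k)$ (equality rather than mere inclusion holds because $\gamma^{-1}$ is again a signed permutation).
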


\begin{theorem}\label{upperbound}
Let $d\ge 1, n\ge 2$ be two integers, and let $\mathcal{G}=(X,E)$ be the Schreier graph induced by the free part of the shift action $\mathbb{Z}^d \curvearrowright n^{\mathbb{Z}^d}$. Then $D_{B}(\mathcal{G})\le n+1$.
\end{theorem}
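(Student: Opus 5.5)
By Remark \ref{11}, it suffices to construct a Borel equivariant injection $\varphi: X\to (n+1)^{\mathbb{Z}^d}$ such that $\varphi(x)\ne\gamma\cdot\varphi(y)$ for all $\gamma\in A_e(\mathbb{Z}^d)\setminus\{\id\}$ and all $x,y\in X$. The new color $n$ serves as a marker. The idea is to start from the coloring $x$ itself and overwrite a sparse, Borel-chosen family of cubes. On the boundary of each chosen cube we place marker symbols, on the middle layer a fixed ``universal'' pattern $p$ that is not invariant under any nontrivial $\gamma$, and in the core we store enough information to recover the overwritten original values.

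\textbf{Step 1: a Borel family of cubes.} Fix an even length $k\ge 6$, large and to be chosen later. Apply Theorem \ref{KST} to a power graph $\mathcal{G}^{R}$ (the paper's $\mathcal{G}^k$ construction, with $R$ of order $k$). This gives a Borel proper coloring with finitely many colors. A standard greedy pass through the color classes then yields a Borel set $D\subseteq X$ that is $R$-separated and $R'$-maximal for some $R'$ of order $R$. For $x\in D$ we use the cube $B=Q_k+0$ in the orbit coordinates of $x$. These cubes are pairwise far apart, so their enlargements are disjoint, and every point lies within bounded distance of one of them. To keep the choice equivariant, we define $\varphi(x)(g)$ in terms of whether $g\cdot x$ lies in a chosen cube around some $h\cdot x\in D$.

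\textbf{Step 2: the coloring.} Outside the chosen cubes, $\varphi(x)(g)=x(g)\in\{0,\dots,n-1\}$, so the symbol $n$ never appears there. On $\partial B$ we write $n$ everywhere. On $M(B)$ we write a pattern $p: M(Q_k)\to\{0,\dots,n-1\}$ chosen so that $p\circ\gamma|_{M(Q_k)}\ne p$ for every nontrivial $\gamma$. By Lemma \ref{Ae} and Corollary \ref{cube}, $\gamma$ preserves $M(Q_k)$ and acts on it by a signed permutation of the axes. There are only $2^d d!$ such $\gamma$, while $|M(Q_k)|$ grows, so a counting argument or an explicit asymmetric mark works. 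One explicit choice is to color with $1$ a ``flag'' of distinct lengths along the positive axes inside the layer and color the rest $0$. On the core $C(B)$ we encode, with the $n$ symbols $0,\dots,n-1$, the original values $x|_{\partial B\cup M(B)}$. This is possible because $|C(B)|=(k-3)^d$ while $|\partial B\cup M(B)|=(k+1)^d-(k-3)^d$, and the latter is smaller than the former for $k$ large only if the boundary and middle layer are counted relative to the core. The core values themselves must also be kept, so a bare count fails. To fix this, we make the middle-layer pattern use only part of its cells, or alternatively encode the boundary data into a second cube chosen nearby.

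\textbf{Step 3: verification.} To recover $x$ from $\varphi(x)$, we locate the cubes as the connected patterns of the symbol $n$ of cube-boundary shape, then decode their cores; this gives injectivity. For asymmetry, suppose $\varphi(x)=\gamma\cdot\varphi(y)$. Since $\gamma$ maps cubes to cubes (Corollary \ref{cube}), the marker boundaries of $\varphi(y)$ are carried to those of $\varphi(x)$. The middle layer of a cube in $\varphi(x)$ equals $p$, while its preimage reads $p\circ\gamma^{-1}$, up to the translation part, which Corollary \ref{cube} handles. This forces $\gamma=\id$.

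\textbf{Main obstacle.} The main obstacle is the information-theoretic bookkeeping in Step 2. Overwriting cells loses data, and the symbol $n$ must not occur except on genuine cube boundaries in a way that could be confused. Since the entropy bound $\log(n+1)>\log n$ leaves only a constant-factor slack, I expect the fix to be as follows. Use the core with all $n+1$ symbols but avoid boundary-like configurations of $n$, for example by using $n$ in the core only at isolated sites of a sublattice. The core then has capacity about $(n+1)^{c|C|}n^{(1-c)|C|}$, which exceeds $n^{|C|+|\partial B|+|M(B)|}$ once $k$ is large, since the surface terms are $O(k^{d-1})$. I would then check carefully that the marker boundaries remain uniquely identifiable.
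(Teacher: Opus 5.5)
Your proposal, with the core-encoding fix from your last paragraph adopted, is essentially the paper's proof. The shared ingredients are: Borel centers taken as the least color class of a proper coloring of $\mathcal{G}^{4dk}$, the symbol $n$ on $\partial B$, and an asymmetric pattern on $M(B)$ (the paper uses a single $1$ at $v=(1,2,\dots,d-1,\frac{k-2}{2})$, which every nontrivial $\gamma$ moves by Lemma \ref{Ae}). The core then stores the entire original $x|_B$ via an injection $n^{Q_k}\to(n+1)^{C(Q_k)}$, which exists once $n^{(k+1)^d}\le(n+1)^{(k-3)^d}$. The paper lets $n$ occur freely in the core, since a core of side $k-4$ cannot contain the boundary of a length-$k$ cube, so your sublattice restriction is harmless but unnecessary. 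The other two remedies you float in Step 2 (sparing middle-layer cells, or a second nearby cube) add no extra capacity and should be dropped.
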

\begin{proof}
Our proof relies on  Remark  \ref{11}. We will pointwise construct a Borel equivariant injection $\varphi: X\longrightarrow (n+1)^{\mathbb{Z}^d}$ such that $\varphi(\alpha)\ne\gamma\cdot\varphi(\beta)$ for all $\alpha,\beta\in X$ and all $\gamma\in A_e(\mathbb{Z}^d)\setminus\{\id\}$. 

We first choose a sufficiently large even number $k$ such that:
\begin{enumerate}
    \item $k>$ Max$\{6,3d+2\}$.
    \item $n^{(k+1)^d}\leq (n+1)^{(k-3)^d}$.
\end{enumerate}
We  show that there is a pattern $p$ such that, for
every  $\gamma\in A_e(\mathbb{Z}^d)\setminus\{id\}$, every
$\alpha\in (n+1)^{\mathbb{Z}^d}$, and every cube $B$ of length
$k$, if $\alpha$ has pattern $p$ on $B$, then
$\gamma\cdot\alpha$ does not have pattern $p$ on
$\gamma(B)$.
 
If $d\ge 2$, let
$v=(1,2,\dots,d-1,\frac{k-2}{2})\in M(Q_k)$. If $d=1$, let
$v=\frac{k-2}{2}$. We will show that $\gamma(v)\neq v$ for every 
$\gamma\in A_e(\mathbb{Z}^d)\setminus\{id\}$. Suppose $\gamma(v)=v$.
By Lemma \ref{Ae}, there exist $\sigma$ and
$\varepsilon\in\{-1,1\}^d$ such that $\gamma(v)=\sum_{i=1}^d \varepsilon_i v_i e_{\sigma(i)}.$ Hence $\varepsilon_i v_i=v_{\sigma(i)}$ for every $i$. Since the coordinates of $v$ are positive and pairwise distinct, we have $\varepsilon_i=1$ and $\sigma(i)=i$
for every $i$. Thus $\gamma=\id$, a contradiction. We color $v$ with $1$ and
all other points in $M(Q_k)$ with $0$. This is the pattern
$p$ we need.

Then we show that, for each cube $B$ of length $k$, if a
coloring has pattern $p$ on $B$, then, after applying any
 $\gamma\in A_e(\mathbb{Z}^d)\setminus\{id\}$, the induced pattern
on $\gamma(B)$ is not equal to $p$. Let $B=Q_k+g$. The unique
point colored with $1$ in $M(B)$ is $v+g$. By
Corollary \ref{cube},  the unique point colored with $1$ in  $M(\gamma(B))$ is $\gamma(v)+\gamma(g)$. Since
$\gamma(v)\neq v$ for every 
$\gamma\in A_e(\mathbb{Z}^d)\setminus\{id\}$, this  pattern is
different from $p$.

By our choice of $k$, there exists an injection $\psi: n^{Q_k} \longrightarrow (n+1)^{C(Q_k)}$ . We have also fixed a pattern $p$ of length $k$ for this $k$.

By Theorem \ref{KST}, there is a Borel proper coloring $c:X\longrightarrow l$ of the power graph $\mathcal{G}^{4dk}$, where $l\in\mathbb{N}$ is an integer. For each $\alpha\in X$, define the center set
\[
\mathcal{C}(\alpha)=\left\{g\in\mathbb{Z}^d:c(g\cdot\alpha)=\min_{h\in\mathbb{Z}^d}c(h\cdot\alpha)\right\}.
\]
We define $\varphi$ as follows: for each $g\in \mathcal{C}(\alpha)$, let $B_g$ denote the cube of length $k$ centered at $g$. We recolor this cube according to the following rules:
\begin{enumerate}
    \item Fill $\partial B_g$ with the new color $n$ to help us identify those changed cubes.
    \item On $M(B_g)$, we place the pattern $p$ to break the symmetry.
    \item For $C(B_g)$, we shift $B_g$ to the standard cube $Q_k$, encode the original coloring of $B_g$ into its core using $\psi$,
and then shift it back to its original position.
\end{enumerate}

We do this for all $g\in \mathcal{C}(\alpha)$ and leave all other areas with their original colors. Let the result be $\varphi(\alpha)$. 

Now we show that $\varphi$ satisfies our requirements. We only need to check these changed areas. Because $c$ and the action of ${\mathbb{Z}}^d$ are Borel and, for a fixed $x\in \mathbb{Z}^d$, determining whether $x$ lies in a changed area involves a countable verification, $\varphi$ is Borel. For the equivariance, note that for each $g\in\mathbb{Z}^d$, 
\[\forall x\in {\mathbb{Z}}^d, x\in \mathcal{C}(g\cdot\alpha)\iff c((x+g)\cdot\alpha)\text{ has the least color}\iff x+g\in \mathcal{C}(\alpha).\]
 Hence $\mathcal{C}(g\cdot\alpha)=\mathcal{C}(\alpha)-g$, which means all the centers move equivariantly, as do all the cubes. Moreover, the color pattern does not change under the shift action. This implies equivariance.

 For injectivity, since $c$ is a proper coloring, any two distinct centers are at a distance greater than $4dk$. So any two distinct changed cubes $B$, $B'$ are at a distance greater than $dk$. Then we can identify each changed cube through its boundary of length $k$. In fact, the color $n$ does not occur in the middle layers or outside these cubes. Thus, even if the color $n$ occurs inside a core, the core is not large enough to have length $k$, so we will not confuse the core with the boundary. After we identify the position of each changed cube,  we can check the core and restore the original coloring through the injection $\psi$.
 
Finally, we show that for all $\alpha,\beta\in X$ and all $\gamma\in A_e(\mathbb{Z}^d)\setminus\{\mathrm{id}\}$, $\varphi(\alpha)\ne\gamma\cdot\varphi(\beta)$. Suppose there are $\alpha,\beta\in X$ and $\gamma\in A_e(\mathbb{Z}^d)\setminus\{\mathrm{id}\}$ such that $\varphi(\alpha)=\gamma\cdot\varphi(\beta)$. We fix a changed cube $B$ in $\varphi(\beta)$. By our choice of $p$, $\gamma\cdot\varphi(\beta)$ has another pattern $p'$ on the image cube. Note that $\gamma(B)$ is a changed cube in $\varphi(\alpha)$. However, by rule (2), the pattern $p'$ will not occur on the middle layer of any changed cube, a contradiction.
\end{proof}
\begin{corollary}
Let $d\ge 1$ and $n\ge 2$ be two integers. Let $\mathcal{G}$ be the Schreier graph induced by the free part of the shift action $\mathbb{Z}^d \curvearrowright n^{\mathbb{Z}^d}$. Then $D_B(\mathcal{G}) = n+1$.
\end{corollary}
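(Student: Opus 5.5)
The corollary is the combination of Theorem \ref{upperbound} with part (2) of Theorem \ref{lowerbound}. The upper bound $D_B(\mathcal{G})\le n+1$ is exactly Theorem \ref{upperbound}, so nothing further is needed there.

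For the lower bound I will apply Theorem \ref{lowerbound}(2) with $\Gamma=\mathbb{Z}^d$ and $S=\{\pm e_1,\dots,\pm e_d\}$. Two hypotheses have to be checked.

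\emph{Amenability and finite generation.} $\mathbb{Z}^d$ is abelian, hence amenable; explicitly, the cubes $Q_{2m}$ form a F{\o}lner sequence. It is generated by the finite symmetric set $S$.

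\emph{Non-triviality of $A_e(\mathbb{Z}^d)$.} It suffices to exhibit one nontrivial automorphism of $\Cay(\mathbb{Z}^d,S)$ fixing $0$. Take the inversion $\gamma(x)=-x$.
\begin{itemize}
\item It is a bijection fixing $0$.
\item It preserves adjacency: $x-y\in S$ if and only if $(-x)-(-y)=-(x-y)\in S$, since $S$ is symmetric.
\item It is not the identity, because $\gamma(e_1)=-e_1\neq e_1$.
\end{itemize}
This works for every $d\ge 1$, including $d=1$, where it is the reflection of the line. It is also one of the maps described in Lemma \ref{Ae}, with $\sigma=\id$ and $\varepsilon=(-1,\dots,-1)$.

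Theorem \ref{lowerbound}(2) then gives $D_B(\mathcal{G})\ge n+1$. Combined with Theorem \ref{upperbound}, this yields $D_B(\mathcal{G})=n+1$.

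There is no real obstacle: the substantive work is in the two earlier theorems, and the only thing to verify is that the non-trivial element of $A_e$ holds for the standard generators, which the inversion map handles uniformly in $d$.
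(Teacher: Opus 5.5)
Your proposal is correct and follows the paper's proof: you combine Theorem \ref{upperbound} with Theorem \ref{lowerbound}(2), using that $\mathbb{Z}^d$ is amenable and that the inversion $x\mapsto -x$ is a nontrivial element of $A_e(\mathbb{Z}^d)$. Your check that inversion preserves adjacency because $S$ is symmetric spells out a step the paper leaves implicit.
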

\begin{proof}
It is well known that $\mathbb{Z}^d$ equipped with the standard generating set is amenable. Moreover, since the map $g\longmapsto g^{-1}$ is an element of $A_e(\mathbb{Z}^d)$, it follows that $A_e$ is nontrivial. The conclusion follows by combining Theorem \ref{lowerbound} and Theorem \ref{upperbound}.
\end{proof}

\section{Upper Bounds for General Finitely Generated Groups}
In this section, we give an upper bound for all finitely generated group shifts.

\begin{theorem}\label{generalbound}
Let $\Gamma$ be a finitely generated group with a symmetric generating set $S$ of size $d$. Let $\mathcal{G}=(X,E)$ be the Schreier graph induced by the free part of the shift action $\Gamma \curvearrowright Y^\Gamma$, where $Y$ is a standard Borel space. Then $D_B(\mathcal{G}) \le |Y| \times (d^2+1)^{d+1}$. 
\end{theorem}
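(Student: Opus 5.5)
Following Remark \ref{11}, I will construct a Borel equivariant injection $\varphi:X\to (Y\times m)^\Gamma$ with $m=(d^2+1)^{d+1}$, so that the new coloring is $f(x)=(x(e),c(x))$. The first coordinate $x(e)$ makes $\varphi_f$ injective, since $\varphi_f(x)(g)$ recovers $x(g)$. The second coordinate $c:X\to m$ is a Borel coloring that has to break every non-trivial $\gamma\in A_e(\Gamma)$. Because $f(x)$ records $x(e)$, condition (2) of Proposition \ref{equivalence} reduces to the following. For no $x,y\in X$ and no $\gamma\ne\id$ can we have, for all $g$, both $x(g)=y(\gamma^{-1}(g))$ and $c(g\cdot x)=c(\gamma^{-1}(g)\cdot y)$. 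So it suffices that the labeled graph given by $c$ alone has no label-preserving automorphism between orbits, or inside an orbit, that induces a non-trivial rooted automorphism of the Cayley graph.

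I plan to take $c$ to be a Borel proper coloring of a power graph. By Theorem \ref{KST}, the graph $\mathcal{G}^2$ has maximum degree at most $d+d(d-1)=d^2$, so it admits a Borel proper coloring $c_0:X\to d^2+1$. I then set $c(x)=(c_0(x),c_0(s_1\cdot x),\dots,c_0(s_d\cdot x))\in (d^2+1)^{d+1}$, where $S=\{s_1,\dots,s_d\}$ is enumerated; this is Borel. In words, each vertex records its own $\mathcal{G}^2$-color together with the colors of its neighbors, listed in the order of the generators.

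The key claim is as follows. Suppose $\psi$ is a graph isomorphism between orbits that preserves $c$. Since $c_0$ is proper on $\mathcal{G}^2$, the $d$ neighbors of any vertex carry pairwise distinct $c_0$-colors, which also differ from the vertex's own color. Write $\psi(g\cdot x)=\gamma(g)\cdot y$. For a vertex $z=g\cdot x$, its neighbor $s_i\cdot z$ has $c_0$-color equal to the $i$-th entry of $c(z)$. The map $\psi$ sends this neighbor to some neighbor of $\psi(z)$ with the same $c_0$-color. By distinctness, and because $c(\psi(z))=c(z)$, that neighbor must be $s_i\cdot\psi(z)$. Hence $\psi$ commutes with each generator, so $\gamma(sg)=s\gamma(g)$, and then $\gamma(g)=g\gamma(e)=g$ by induction along words. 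This gives $\gamma=\id$, so condition (2) holds.

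I expect the main obstacle to be bookkeeping rather than any deep step. One issue is whether the Schreier graph edges are labeled, given that $s\cdot x$ might coincide for different $s$; this cannot happen, because the action is free. Another is to match the left-multiplication convention of $\Cay(\Gamma,S)$ with the right shift, and to confirm that the factor $|Y|$ suffices when $Y$ is infinite, where the product is read as a cardinal.
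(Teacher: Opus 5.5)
Your proposal is correct and is essentially the paper's proof. It uses the same coloring $x\mapsto (x(e),k(x),k(s_1\cdot x),\dots,k(s_d\cdot x))$, with $k$ a Borel proper $(d^2+1)$-coloring of $\mathcal{G}^2$, and the same key step: distinct neighbors of a vertex receive distinct $k$-colors, so a color-preserving isomorphism must commute with every generator. The only cosmetic difference is packaging: you verify condition (2) of Proposition \ref{equivalence}, whereas the paper argues directly that any color-preserving automorphism of $\mathcal{G}$ is $\Gamma$-equivariant, hence the identity by the $x(e)$ coordinate.
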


\begin{proof}
 Consider the $2$-power graph $\mathcal{G}^2$. The maximum degree of $\mathcal{G}^2$  satisfies $\Delta(\mathcal{G}^2) \le d + d(d-1) = d^2$. By Theorem \ref{KST}, we have $\chi_B(\mathcal{G}^2) \le \Delta(\mathcal{G}^2) + 1 \le d^2 + 1$. Hence, $\mathcal{G}^2$ has a Borel proper coloring $k: X \longrightarrow C$, where  $|C| \le d^2+1$. Define the coloring map $c: X \to Y \times C^{d+1}$ as follows:
\[
    c(x) = \left( x(e), k(x), k(s_1 \cdot x), \dots, k(s_d \cdot x) \right),
\]
where $S=\{s_1,\dots,s_d\}$.

We claim that $c$ is a Borel distinguishing coloring. Suppose there exists a graph automorphism $\varphi \in \Aut(\mathcal{G})$ such that $c \circ \varphi = c$. We will show that $\varphi = \id$. First we show that $\varphi$ is a $\Gamma$-equivariant map. Since $\varphi$ is a graph automorphism, for any given $s_i \in S$,  there exists some generator $s_j \in S$  such that $\varphi(s_i \cdot x) = s_j \cdot \varphi(x)$. It  suffices to show that $i=j$. By the definition of $c$, for each $x \in X$ we have:
\begin{enumerate}
    \item $\varphi(x)(e) = x(e)$.
    \item $k(\varphi(x)) = k(x)$.
    \item $k(s_l \cdot \varphi(x)) = k(s_l \cdot x)$ for all $l \in \{1, \dots, d\}$.
\end{enumerate}
Applying property (2)  to the  vertex $ s_i \cdot x$, we  have:
\[ k(s_j \cdot \varphi(x)) = k(\varphi(s_i \cdot x))=k(s_i \cdot x). \]
Combining this with property (3), we have:
\[ k(s_j \cdot x) = k(s_i \cdot x). \]

Suppose for contradiction that $j \neq i$. Because the action is free, we have that $1 \le d_{\mathcal{G}}(s_j \cdot x,s_i \cdot x) \le 2$. Consequently, they are adjacent in the $2$-power graph $\mathcal{G}^2$. Since $k$ is a proper coloring of $\mathcal{G}^2$, we must have $k(s_j \cdot x) \neq k(s_i \cdot x)$, a contradiction. 

This forces $j = i$, and we obtain $\varphi(s \cdot x) = s \cdot \varphi(x)$ for all $s \in S$. Thus, $\varphi$ is a  $\Gamma$-equivariant map. 

Finally, combining this with property $(1)$, for each $g\in\Gamma$, we have
\[ x(g)=(g\cdot x)(e)=\varphi(g\cdot x)(e)
=(g\cdot\varphi(x))(e)=\varphi(x)(g). \]
This proves $\varphi(x) = x$. Hence $\varphi$ is the identity map, conclusively proving that $c$ is a valid Borel distinguishing coloring.
\end{proof}

This  simultaneously yields  corollaries for both finite and countably infinite alphabets, explicitly resolving the free group $\mathbb{F}_2$ hypothesis.

\begin{corollary}
Let $m\ge 1, n\ge 2$, and let $\mathcal{G}$ be the Schreier graph induced by the free part of the shift action $\mathbb{F}_m \curvearrowright n^{\mathbb{F}_m}$. Then the Borel distinguishing number of $\mathcal{G}$ is  finite. 
\end{corollary}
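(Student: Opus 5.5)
This corollary should be an immediate specialization of Theorem \ref{generalbound}. I would take $\Gamma=\mathbb{F}_m$ with its standard symmetric generating set $S=\{a_1^{\pm1},\dots,a_m^{\pm1}\}$. This is a finite symmetric generating set that does not contain the identity, and it has size $d=2m$. I would take the alphabet to be $Y=n=\{0,\dots,n-1\}$, a finite discrete space and hence standard Borel, with $|Y|=n$. The graph $\mathcal{G}$ in the corollary is then exactly the Schreier graph on the free part of $\mathbb{F}_m\curvearrowright n^{\mathbb{F}_m}$ considered in Theorem \ref{generalbound}.

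Applying Theorem \ref{generalbound} directly gives
\[
D_B(\mathcal{G})\le n\cdot\bigl((2m)^2+1\bigr)^{2m+1}=n\,(4m^2+1)^{2m+1},
\]
which is a finite number. No other step is needed.

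The only point to check is that the hypotheses of Theorem \ref{generalbound} actually hold here. That theorem's proof uses three things. First, $\mathcal{G}$ has maximum degree at most $d$. Second, the action on the vertex set is free, so that distinct generators move a point to distinct neighbours at distance $1$ or $2$. Third, Theorem \ref{KST} can be applied to the $2$-power graph $\mathcal{G}^2$. All three hold because we restrict to the free part and $S$ is finite. I do not expect any real obstacle.

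As a remark, the same argument with $|Y|=\aleph_0$ shows that the bound is countable for a countably infinite alphabet. This is presumably the companion statement the paper has in mind.
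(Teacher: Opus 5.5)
Your proposal is correct and matches the paper's approach: the corollary is an immediate specialization of Theorem \ref{generalbound} to $\Gamma=\mathbb{F}_m$ with the standard symmetric generating set of size $d=2m$ and $Y=n$. This gives the explicit finite bound $D_B(\mathcal{G})\le n(4m^2+1)^{2m+1}$.
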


Furthermore, by applying this theorem to the countably infinite alphabet $\omega$, we can determine its exact value as follows.

\begin{corollary}
Let $\Gamma$ be any finitely generated amenable group. Let $\mathcal{G}$ be the Schreier graph induced by the free part of the shift action $\Gamma \curvearrowright \omega^\Gamma$. Then its Borel distinguishing number is exactly $D_B(\mathcal{G}) = \omega$.
\end{corollary}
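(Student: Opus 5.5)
Here $\omega$ denotes the countably infinite alphabet. The plan is to prove the two inequalities $D_B(\mathcal{G})\le\omega$ and $D_B(\mathcal{G})\ge\omega$ separately.

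\emph{Upper bound.} Apply Theorem \ref{generalbound} with $Y=\omega$. It gives a Borel distinguishing coloring with values in $\omega\times C^{d+1}$, where $C$ is finite. This target is a countable standard Borel space, so $D_B(\mathcal{G})\le|\omega\times C^{d+1}|=\omega$. No amenability is needed for this direction.

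\emph{Lower bound.} Suppose, for contradiction, that $D_B(\mathcal{G})=m$ for some finite $m$. I will use monotonicity in the alphabet via Corollary \ref{embedding_comparison}. Let $\mathcal{H}_{m}$ denote the Schreier graph of the free part of $\Gamma\curvearrowright m^\Gamma$.

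The inclusion $m\subseteq\omega$ induces a map $m^\Gamma\hookrightarrow\omega^\Gamma$. This map is Borel, injective and $\Gamma$-equivariant. It also sends the free part into the free part, since a configuration has trivial stabilizer regardless of which alphabet it is viewed in. Corollary \ref{embedding_comparison} therefore gives $D_B(\mathcal{H}_{m})\le D_B(\mathcal{G})=m$.

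On the other hand, Theorem \ref{lowerbound}(1) applies to $\mathcal{H}_m$ because $\Gamma$ is finitely generated and amenable. It gives $D_B(\mathcal{H}_{m})\ge m$. If $A_e(\Gamma)$ is nontrivial, part (2) even gives $\ge m+1$, which is already a contradiction.

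To obtain a contradiction in general, I apply the same argument with the alphabet $m+1$ in place of $m$, and I treat the case $m=1$ by using alphabet $2$. The map $(m+1)^\Gamma\hookrightarrow\omega^\Gamma$ is again a Borel equivariant injection preserving free parts. So
\[
m+1\le D_B(\mathcal{H}_{m+1})\le D_B(\mathcal{G})=m,
\]
which is a contradiction. Hence $D_B(\mathcal{G})$ is infinite. Combined with the upper bound, $D_B(\mathcal{G})=\omega$.

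The only real check is that the inclusion map lands in the free part of $\omega^\Gamma$, so that Corollary \ref{embedding_comparison} applies. This holds because the shift action is defined coordinatewise: $g\cdot x=x$ in $(m+1)^\Gamma$ if and only if $g\cdot x=x$ in $\omega^\Gamma$. I do not expect any other obstacle.
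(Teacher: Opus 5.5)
Your proposal is correct and follows the paper's proof. The upper bound comes from Theorem \ref{generalbound} with $Y=\omega$, and the lower bound comes from embedding $m^\Gamma$ into $\omega^\Gamma$ and combining Corollary \ref{embedding_comparison} with Theorem \ref{lowerbound}(1). You phrase the lower bound as a contradiction using alphabet $m+1$, whereas the paper says ``$D_B(\mathcal{G})\ge m$ for arbitrarily large $m$''; your explicit check that the inclusion maps free part into free part is a detail the paper leaves implicit.
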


\begin{proof}
For any finite integer $m \ge 2$, the shift space $m^\Gamma$ naturally embeds into $\omega^\Gamma$. By Corollary \ref{embedding_comparison} and Theorem \ref{lowerbound}(1), we have $D_B(\mathcal{G})\ge m$ for arbitrarily large integers $m$; hence $D_B(\mathcal{G}) \ge \omega$. 

By setting $Y = \omega$ in Theorem \ref{generalbound}, we have $D_B(\mathcal{G}) \le \omega \times (d^2+1)^{d+1} = \omega$. Combining both bounds, we obtain exactly $D_B(\mathcal{G}) = \omega$.
\end{proof}

\section{Acknowledgements}
The authors would like to thank Longyun Ding and Su Gao for their guidance and support. We would also like to thank 
Wei Dai, Xiangxi Hu, Yingying Jiang, Feng Li, and Tianhao Wang for many helpful conversations throughout the development of this work. We also thank Xiaohang Zheng for conversations about amenable entropy theory.


\end{document}